\date{}
\newcommand{\beqa}{\begin{eqnarray*}}
\newcommand{\eeqa}{\end{eqnarray*}}
\newcommand{\beqn}{\begin{eqnarray}}
\newcommand{\eeqn}{\end{eqnarray}}
\newcommand{\N}{\mathbb N}
\newcommand{\A}{\mathcal{A}}
\newcommand{\B}{\mathcal{B}}
\newcommand{\M}{\mathcal{M}}
\newcounter{cnt1}
\newcounter{cnt2}
\newcounter{cnt3}
\newcommand{\blr}{\begin{list}{$($\roman{cnt1}$)$}
 {\usecounter{cnt1} \setlength{\topsep}{0pt}
 \setlength{\itemsep}{0pt}}}
\newcommand{\bla}{\begin{list}{$($\alph{cnt2}$)$}
 {\usecounter{cnt2} \setlength{\topsep}{0pt}
 \setlength{\itemsep}{0pt}}}
\newcommand{\bln}{\begin{list}{$($\arabic{cnt3}$)$}
 {\usecounter{cnt3} \setlength{\topsep}{0pt}
 \setlength{\itemsep}{0pt}}}
\newcommand{\el}{\end{list}}
\newtheorem{thm}{Theorem}[section]
\newtheorem{lemma}[thm]{Lemma}
\newtheorem{cor}[thm]{Corollary}
\newtheorem{ex}[thm]{Example}
\newtheorem{Def}[thm]{Definition}
\newtheorem{proposition}[thm]{Proposition}
\newtheorem{rem}[thm]{Remark}
\newcommand{\Rem}{\begin{rem} \rm}
\newcommand{\bdfn}{\begin{Def} \rm}
\newcommand{\edfn}{\end{Def}}
\newcommand{\ba}{\begin{array}}
\newcommand{\ea}{\end{array}}
\newcommand{\ben}{\begin{enumerate}}
\newcommand{\een}{\end{enumerate}}
\begin{document}

\sloppy

\title[\tiny{Russo-Dye Theorem, Stinespring Representation, and Radon Nikod\'ym Dervative}]{ Russo-Dye type Theorem, Stinespring representation, 
and Radon Nikod\'ym dervative for invariant block multilinear completely positive maps
}

\author[Ghatak]{Anindya Ghatak}
\address[Anindya Ghatak]{Stat--Math Division\\
Indian Statistical Institute\\8th Mile, Mysore Road\\ Bangalore 560059\\India, E-mail~: {\tt anindya.ghatak123@gmail.com}}

\author[Sensarma]{Aryaman Sensarma}
\address[Aryaman Sensarma]{Stat--Math Division\\
Indian Statistical Institute\\8th Mile, Mysore Road\\ Bangalore 560059\\India, E-mail~: {\tt aryamansensarma@gmail.com}}


\begin{abstract}
 In this article, we investigate certain basic properties of invariant multilinear CP maps. For instance, we prove Russo-Dye type theorem for invariant multilinear positive maps on both commutative $C^*$-algebras and finite-dimensional $C^*$-algebras.   
 We show that every invariant multilinear CP map is automatically symmetric and completely bounded. Possibly these results are unknown in the literature (see \cite{Heo 00,Heo,HJ 2019}). 
Motivated from quantum algorithm simulation \cite{BSD} we introduce  multilinear version of invariant block CP map $ \varphi=[\varphi_{ij}] : M_{n}(\A)^k \to M_n(\mathcal{B({H})}).$  Then we derive that each $\varphi_{ij}$ can be dilated to a common commutative tuple of
 $*$-homomorphisms. As a natural appeal, the suitable notion of minimality 
 has been identified within this framework. A special case of our result 
 recovers a finer version of J. Heo's  Stinespring type dilation theorem
 of  \cite{Heo}, and A. Kaplan's Stinespring type dilation theorem \cite{AK89}. As an application, we show Russo-Dye type theorem for invariant
 multilinear completely positive maps. Finally, using minimal 
 Stinespring dilation we obtain Radon Nikod\'ym theorem in this setup. Our result includes as a special case the Radon-Nikodým theorem of J. Heo from \cite{Heo} and the Radon-Nikodým theorem of M. Joița from \cite{Joita2}.
\end{abstract}

\subjclass[2010]{46L05, 46L07, 47A20}

\keywords{Completely positive map, Completely bounded map, Multilinear map, Stinespring dilation, Russo-Dye theorem, Radon Nikody\'m theorem \\ \hspace*{\fill} {\bf Version: \today}}

\maketitle

\tableofcontents
\section{Introduction}
 In the late 1960s, W. B. Arveson used CP maps to define non-commutative measures in a seminal paper \cite{W. Arveson69}. Subsequently, he obtained the Radon-Nikod\'ym theorem of CP maps using the minimality condition of the Stinespring dilation theorem. 
   Over the decades, the theory of CP maps has been widely used as a tool in the study of $C^*$-algebras, and Operator Theory \cite{Joita,Joita1}. Moreover, CP maps has been used in great interest to study Markov maps of quantum probability, quantum channel in 
 quantum information theory, and in the classification problem of $C^*$-algebras (see \cite{Paulsen} and references therein).  
 
 In 1987, E. Christensen and A. M. Sinclair studied symmetric multilinear completely positive maps and completely bounded maps $\varphi:\mathcal{A}^{k}\to \mathcal{B(H)}$ \cite[p. 153, p. 155]{Christensen 87}. They obtained a technical Stinespring type theorem
  \cite[Theorem 4.1]{Christensen 87} 
 using Wittstock Hahn-Banach theorem (see \cite{Wittstock 83}). In 1998, A. M. Sinclair and R. R. Smith characterized that a von Neumann algebra is \emph{injective} if and only if every bilinear completely bounded map can be factorized in
  terms of bilinear CP maps \cite[Theorem 6.1]{SS98}. For more details on completely bounded multilinear maps and their intrinsic connection with the Haagerup tensor norm, we refer to \cite[Chapter 17]{Paulsen}.
  
  In 1990, J. P. Antoine and A. Inoue studied representations of invariant sesquilinear forms on partial $*$-algebras as these representations are arising from statistical mechanics and quantum field theory \cite{AI}.  
  Motivated by the work of J. P. Antoine and A. Inoue \cite{AI},  J. Heo studied multilinear CP maps using invariant property \cite{Heo 00, Heo, Heo 04}. Imposing invariant property, 
the author obtained an excellent form of the Stinespring dilation theorem for invariant multilinear completely positive maps which are completely bounded and symmetric as well (cf. \cite{Heo 00, Heo}).

A block map $\varphi= [\varphi_{ij}]: M_{n}(\mathcal{A})\to M_{n}(\mathcal{B})$  is defined by $$[\varphi_{ij}]([a_{ij}])= [\varphi_{ij}(a_{ij})]
\text{ for all } [a_{ij}]\in M_{n}(\mathcal{A}).$$
Given a fixed $[\lambda_{ij}]$  positive definite matrix, we can associate a block map $[\varphi_{ij}]:M_n(\mathbb{C})\longrightarrow M_n(\mathbb{C})$ defined by $$[\varphi_{ij}][a_{ij}]=[\lambda_{ij}a_{ij}] \text{ for all } [a_{ij}] \in M_n(\mathbb{C}),$$ is 
 a positive map. Hence block states can be thought of as a generalization of Schur products.
  Block CP maps are of independent interest and these provide various applications (see \cite{BV20, Paulsen} and references therein).  In 1999, J. Heo extensively studied block CP maps and derived the Stinespring type theorem using Hilbert $C^*$-module technique. 
  
The Aaronson-Ambanis conjecture is a significant proposition in the analysis of boolean functions. It states that for a degree $d$-multilinear polynomial $f:\{\pm 1\}^n \longrightarrow [0,1]$, the variable with the highest influence in $f$ must have an influence of at least $\text{poly}(\text{var}[f], \frac{1}{d})$. This conjecture raises a structural question about bounded polynomials on the hypercube and remains an open problem of great interest. In a related work, referenced in \cite{BSD}, the authors provide a proof for a different result. They show that in any completely bounded degree $d$ block-multilinear form with constant variables, there always exists a variable with an influence of at least $\frac{1}{\text{poly}(d)}$. This finding contributes to the exploration of bounded polynomials on the hypercube and provides insights into the conjecture. Furthermore, Montanaro has made progress on the Aaronson-Ambanis conjecture by proving a special case in which the block-multilinear forms satisfy the condition of having all coefficients with the same magnitude. This result, described in \cite{Montanaro}, narrows down the scope of the conjecture and adds to our understanding of the underlying principles. For more comprehensive information and specific details on these findings, we recommend referring to the cited sources \cite{AA,Mid05}. Block multilinear forms hold immense significance across multiple domains, with applications ranging from quantum algorithm simulation \cite{BSD} to other relevant fields.
 
Drawing inspiration from quantum algorithm simulation \cite{BSD} (thanks to Prof. Mathew Graydon for sharing such ideas), A. Kaplan's block states \cite{AK89} and J. Heo's work on invariant multilinear CP maps \cite{Heo 00}, we present a novel extension known as block invariant multilinear CP maps. For a detailed and comprehensive exploration of this topic, refer to Section \ref{Multi CP Maps}. 
 
 Our proposed model unifies the works of A. Kaplan \cite{AK89}, and J. Heo \cite{Heo 00,Heo} (see Section \ref{conclud remark} for more details). In addition, we provide several concrete examples to illustrate the concepts discussed in this topic. These examples serve to demonstrate the usefulness and applicability of the methods and ideas presented, and provide a deeper understanding of the theory.

Throughout the article, we assume that $\mathcal{A}, \mathcal{B}$ are unital $C^*$-algebras, and $\mathcal{A}_{sa}$ is the set of all self-adjoint elements of  $\mathcal{A}.$ 
 Let  $\varphi: \mathcal{A}\to \mathcal{B}$ be a linear map between two $C^*$-algebras. The \emph{adjoint} of a linear map $\varphi: \mathcal{A}\to \mathcal{B}$ is defined by
 $$
 \varphi^{*}(a) := \varphi(a^*)^* \text{ for all $a\in \mathcal{A}$}.
 $$
A linear map $\varphi$ is said to be \emph{symmetric} if $\varphi^*=\varphi$ and $\varphi$ is said to be \emph{positive} if $\varphi(a) \geq 0$ whenever $a\geq 0.$  It is  worth mentioning that every positive
map   is automatically symmetric and bounded. In 1966, Russo-Dye proved an important result of positive maps.
 \begin{thm} [Russo-Dye \cite{Paulsen,RD66}]\label{Russo-Dye} Let $\varphi: \mathcal{A}\to  \mathcal{B}$ be a positive map. Then $\Vert \varphi\Vert= \Vert \varphi(1)\Vert$.
 \end{thm}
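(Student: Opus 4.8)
The plan is to prove the two inequalities $\|\varphi(1)\| \le \|\varphi\|$ and $\|\varphi\| \le \|\varphi(1)\|$ separately. The first is immediate: a positive map is bounded (as recalled above) and $\|1\|=1$, so $\|\varphi(1)\| \le \|\varphi\|\,\|1\| = \|\varphi\|$. For the reverse inequality it suffices to show $\|\varphi(a)\| \le \|\varphi(1)\|$ whenever $\|a\| \le 1$. I would first dispose of the case $a = a^* \in \mathcal{A}_{sa}$: if $\|a\| \le 1$ then $-1 \le a \le 1$, hence, applying the positive (hence order-preserving) map $\varphi$, one gets $-\varphi(1) \le \varphi(a) \le \varphi(1)$; since $\varphi(a)$ is self-adjoint and $0 \le \varphi(1) \le \|\varphi(1)\|\,1$, this yields $-\|\varphi(1)\|\,1 \le \varphi(a) \le \|\varphi(1)\|\,1$, i.e. $\|\varphi(a)\| \le \|\varphi(1)\|$.

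The substantive point is to pass from self-adjoint elements to an arbitrary $a$ in the unit ball without losing the sharp constant (the crude splitting $a = \mathrm{Re}\,a + i\,\mathrm{Im}\,a$ only gives $\|\varphi\| \le 2\|\varphi(1)\|$). For this I would combine two ingredients. First, the geometric form of the Russo--Dye theorem: the closed unit ball of the unital $C^*$-algebra $\mathcal{A}$ is the norm-closure of the convex hull of its unitary group $\mathcal{U}(\mathcal{A})$; since $\varphi$ is linear and bounded and the norm on $\mathcal{B}$ is convex and continuous, it then suffices to bound $\|\varphi(u)\|$ for $u \in \mathcal{U}(\mathcal{A})$. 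Second, fix a unitary $u$ and choose a state $\psi$ on $\mathcal{B}$ with $|\psi(\varphi(u))| = \|\varphi(u)\|$ (such $\psi$ exists by Hahn--Banach together with weak-$*$ compactness of the state space of the unital $C^*$-algebra $\mathcal{B}$); replacing $u$ by $e^{i\theta}u$ we may assume $\psi(\varphi(u)) = \|\varphi(u)\| \ge 0$. The functional $g := \psi \circ \varphi$ is a positive linear functional on $\mathcal{A}$, so $g(1) = \|g\|$ and the Cauchy--Schwarz inequality $|g(b)|^2 \le g(1)\,g(b^*b)$ holds; applying it with $b=u$ gives $\|\varphi(u)\|^2 = |g(u)|^2 \le g(1)\,g(u^*u) = g(1)^2 = \psi(\varphi(1))^2 \le \|\varphi(1)\|^2$, whence $\|\varphi(u)\| \le \|\varphi(1)\|$, completing the argument.

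I expect the main obstacle to be precisely this last step --- securing the sharp constant $1$ rather than $2$. The self-adjoint case is routine order theory, but the general case genuinely requires either the convex-hull form of Russo--Dye or the state/Cauchy--Schwarz device above: a merely positive map (as opposed to a $2$-positive one) need not satisfy a Schwarz inequality, so the usual $2\times 2$ dilation trick applied entrywise is unavailable. Some care is needed to justify that the supremum over states is attained and that the composition of a state with a positive map is again a positive functional, but both are standard once the state space of the unital algebra $\mathcal{B}$ is known to be weak-$*$ compact.
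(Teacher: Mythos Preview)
The paper does not prove this theorem; it is quoted from \cite{Paulsen,RD66} as a background result, so there is no ``paper's own proof'' to compare against. That said, your argument has a genuine gap at the decisive step. After reducing to unitaries you assert that one can choose a \emph{state} $\psi$ on $\mathcal{B}$ with $|\psi(\varphi(u))|=\|\varphi(u)\|$. This is false in general: for an arbitrary $b\in\mathcal{B}$ the quantity $\sup\{|\psi(b)|:\psi\text{ a state}\}$ is only the numerical radius $w(b)$, and $w(b)<\|b\|$ whenever $b$ is not normaloid (for instance $b=E_{12}\in M_2(\mathbb{C})$ has $w(b)=\tfrac12<1=\|b\|$). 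Nothing forces $\varphi(u)$ to be normaloid---one can construct a positive map $\varphi:C(\mathbb{T})\to M_2(\mathbb{C})$ with $\varphi(z)=E_{12}$. Hahn--Banach does yield a norm-one functional attaining $\|\varphi(u)\|$, but that functional need not be positive, so the Cauchy--Schwarz inequality for $g=\psi\circ\varphi$ is unavailable and the argument collapses.

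The standard repair keeps your reduction to unitaries via the geometric Russo--Dye theorem but handles the unitary case differently: restrict $\varphi$ to the \emph{commutative} $C^{*}$-subalgebra $C^{*}(1,u)\cong C(\sigma(u))$. A positive map on a commutative domain is automatically completely positive, hence $2$-positive, so the Schwarz inequality
\[
\varphi(u)^{*}\varphi(u)\le\|\varphi(1)\|\,\varphi(u^{*}u)=\|\varphi(1)\|\,\varphi(1)\le\|\varphi(1)\|^{2}\cdot 1
\]
is legitimate and gives $\|\varphi(u)\|\le\|\varphi(1)\|$ directly. (Equivalently, approximate $u$ in $C^{*}(1,u)$ by $\sum_j\lambda_j p_j$ with $|\lambda_j|\le 1$, $p_j\ge 0$, $\sum_j p_j=1$, and invoke \cite[Lemma~2.3]{Paulsen}, exactly as the present paper does in its Theorem~\ref{Thm: norm attainment theorem}.) Your diagnosis that the self-adjoint case is routine and that the whole difficulty lies in securing the sharp constant for general elements is correct; the missing idea is this commutative-restriction trick rather than the state device.
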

 
Let $\varphi: \mathcal{A}\to \mathcal{B}$ be a  linear map, then $t$-\emph{amplification} map $\varphi_{t}: M_{t}( \mathcal{A})\to M_{t}(\mathcal{B})$ is defined by
 $\varphi_{t}[a_{ij}]= [\varphi(a_{ij})]$ for all $[a_{ij}]\in M_{t}(\mathcal{A}).$ A map $\varphi: \mathcal{A}\to \mathcal{B}$ is said to be \emph{completely positive} (in short: CP) if $\varphi_{t}$ is positive for each $t\in \N.$
  The map $\varphi$ is said to be \emph{completely bounded} if  $\sup\{ \Vert \varphi_{t} \Vert: t\in \mathbb{N}\}<\infty.$ Moreover, completely bounded norm of $\varphi$ is denoted and defined by
 $$\Vert \varphi \Vert_{cb}:= \sup\{ \Vert \varphi_{t} \Vert: t\in \mathbb{N}\}.$$  
For fixed $k \geq 1$ and $m=[\frac{k+1}{2}],$ set
 \begin{align}
\A^k&:=\underbrace{ \A\times \A \times \cdots \times\A}_{k \text{-times}},
&M_{t}\big(\A\big)^k&:=\underbrace{M_{t}\big(\A \big)\times M_{t}\big(\A \big)\times \cdots \times M_{t}\big(\A \big)}_{k \text{-times}}  \text{~for every~} t\in \N.
\end{align}
Let $\varphi:  \A^k \to \mathcal{B}$ be a multilinear map. Then \emph{$t$-amplification} map  of $\varphi$ is a multilinear map
 $\varphi_t: M_{t}\big(\A\big)^k\to M_t(\mathcal{B})$ defined by 
 \begin{align}
 \varphi_t([a_{{1}_{ij}}],[a_{{2}_{ij}}],\ldots ,[a_{{k}_{ij}}])=\left[\sum_{r_1,r_2,\ldots,r_{k-1}=1}^t \varphi\left (a_{{1}_{ir_1}}, a_{{2}_{r_1r_2}}, \ldots, a_{{k}_{r_{k-1}j}}\right)
 \right].\end{align}
 Moreover, the completely bounded norm of $\varphi$ is defined by (see  \cite{Christensen 87}) 
\begin{align*}
&\Vert \varphi \Vert_{cb}:= \sup\{ \Vert \varphi_{t} \Vert: t\in \mathbb{N}\},~\mbox{where},\\
& \Vert \varphi_{t} \Vert=\sup\{\Vert \varphi_t([a_{{1}_{ij}}],[a_{{2}_{ij}}],\ldots ,[a_{{k}_{ij}}])\Vert : ~ \Vert [a_{{l}_{ij}}] \Vert \leq 1~~\mbox{for all}~ 1 \leq l \leq k\}. 
\end{align*}
\begin{Def} [E. Christensen, A. M. Sinclair \cite{Christensen 87}] 
A  multilinear map $\varphi: \A ^k \to \mathcal{B}$ is said to be
\begin{enumerate}
\item \emph{positive} if $\varphi( a_{{1}}, a_{{2}},\ldots , a_{k}) \geq 0,$ whenever $(a_{{1}}, a_{{2}},\ldots , a_{{k}})=(a_{k}^*, a_{k-1}^*,\ldots, a_{1}^*)$ with an additional assumption $a_{m}\geq 0$ when $k$ is odd;
\item \emph{completely positive} if $\varphi_{t}$ is positive for each $t\in \N.$ 
 \end{enumerate}
\end{Def}
\noindent  The \emph{adjoint} of a multilinear map $\varphi : \A^k\to \mathcal{B}$ is denoted and defined by
 $$
 \varphi^*(a_{1}, a_{2}, \cdots,a_{k}):=\varphi(a_{k}^*, \cdots, a_{2}^*,a_{1}^*)^* \text{  for all  }a_{1}, a_{2}, \cdots, a_{k}\in \A.
 $$
A multilinear map $\varphi : \A^k\to \mathcal{B}$ is called \emph{symmetric} if $\varphi^*=\varphi$ \cite{Christensen 87}.
 \begin{Def}[J. Heo \cite{Heo}]
A multilinear map $\varphi: \mathcal{A}^{k}\to \mathcal{B}$ is said to be invariant if it satisfies the following:
\begin{enumerate}
\item[(i.)]  
$ \varphi \left( a_{1}c_{1},\hdots, a_{m-1} c_{m-1}, a_{m}, a_{m+1}, \hdots, a_{2m-1}\right)
    =\varphi \left( a_{1}, \hdots, a_{m-1}, a_m , c_{m-1} a_{m+1}, \hdots, c_{1} a_{2m-1} \right)$
when $k=2m-1$ and
for all $a_{1}, \hdots,a_{2m-1}, c_{1}, \hdots, c_{m-1} \in \mathcal{A}.$
\item[(ii.)]  
$    \varphi \left(a_{1} c_{1},\hdots, a_{m} c_{m}, a_{m+1}, \hdots, a_{2m}\right)
    =\varphi\left( a_{1},\hdots, a_{m-1}, a_{m} , c_{m}a_{m+1}, \hdots,c_{1} a_{2m} \right)$
 when $k=2m$ and for all $a_{1}, a_{2},\cdots ,a_{2m}, c_{1}, c_{2},\cdots, c_{m} \in \mathcal{A}.$  
\end{enumerate}
 \end{Def}
 Now, we fix some notations which are useful in the later sections.
 \begin{enumerate}
   \item $IP(\mathcal{A}^k, \mathcal{B})$:  The set of all \emph{invariant  positive $k$-linear} maps from $\mathcal{A}^k$ to $\mathcal{B}.$
 \item $ICP(\mathcal{A}^k,\mathcal{B})$:  The set of all   \emph{invariant completely positive $k$-linear} maps from $\mathcal{A}^k$ to $\mathcal{B}.$
 \end{enumerate}
We now briefly describe the plan of the article. In the Section \ref{RDT}, we study basic properties of multilinear CP maps. If an invariant multilinear positive map $\varphi: C(X)^{k}\to \mathcal{B(H)}$ is positive, then $\varphi$ 
attains its norm at $(1,\hdots, 1)$ (see Theorem \ref{Thm: norm attainment theorem}). 
  In the proof, we have extensively used the method of partition of unity.
  This can be thought of as a commutative multilinear analogue of Russo-Dye Theorem \cite{RD66}.  We also study Russo-Dye type theorem for invariant multilinear positive maps on finite-dimensional 
  $C^*$-algebras (see Theorem~\ref{RDND}). Russo-Dye theorem has a vast literature in various contexts in operator algebras (see \cite{AK96,Davidson}). Hence, we believe that these results are of
independent interest and will be useful in future investigations. We show that every multilinear invariant positive map is symmetric (see Proposition \ref{Prop: positivity implies symmetric}) and completely bounded (see Proposition \ref{invariant CP implies CB}).  
  In Example \ref{ex1}, we obtain a counter example that an invariant positive multilinear map $\varphi$ on a commutative domain $(\mathbb{C}^{2})^3$ is not completely positive. This result differs from a well-known theorem of Stinespring that every positive map on a commutative $C^*$-algebra is CP \cite[Theorem 3.11]{Paulsen}.

           In the Section \ref{Multi CP Maps}, we introduce block multilinear CP map. In Theorem \ref{Theorem: BlockStinespring}, we obtain Stinespring type theorem for block multilinear invariant CP maps.
            In this framework, we also address the suitable minimality condition.  In Theorem \ref{Theorem: Minimal BlockStinespring}, we prove that such minimality condition is unique up to unitary equivalence.
             In the Section \ref{Rdaon},  we obtain Radon Nikod\'ym theorem in this framework (see  Theorem \ref{thm: Radon Nikodym derivative}).
     Finally, as an application of multilinear Stinespring type theorem, a version of Russo-Dye type theorem has been derived for invariant multilinear CP maps (see Corollary \ref{Cor: Multilinear Russo Dye}).         
  \section{Russo-Dye type theorem and Properties of multilinear positive maps} \label{RDT}
 In the next result, we prove  multilinear analogue of Russo-Dye type Theorem (see Theorem \ref{Russo-Dye}). More precisely, this result ensures the norm attainment of  multilinear invariant positive maps on 
  commutative domain at $(1, 1, \cdots ,1)$ point. The idea of the proof is based on using the delicate structure of partition of unity and invariant property.
 
\begin{thm}\label{Thm: norm attainment theorem}
	Let $\varphi\in IP\big( C(X)^k , \mathcal{B(H)}\big),$  then $\|\varphi\|=\|\varphi(1,1, \cdots, 1)\|.$
\end{thm}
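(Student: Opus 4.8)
The inequality $\|\varphi(1,\dots,1)\|\le\|\varphi\|$ is trivial, so the whole content is the bound $\|\varphi\|\le\|\varphi(1,\dots,1)\|$, and the plan is to reduce it, via a partition of unity, to an elementary estimate about positive operators. The crucial preliminary is the following algebraic lemma: \emph{if $p_1,\dots,p_k\in C(X)$ are positive, then $\varphi(p_1,\dots,p_k)\ge 0$} (this is weaker than, and need not follow from, complete positivity, which indeed fails here by Example~\ref{ex1}). I would prove it by applying the invariance relations twice, using commutativity of $C(X)$ and that $p_i=\sqrt{p_i}\,\sqrt{p_i}$ with $\sqrt{p_i}$ self-adjoint. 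Take $k=2m-1$. First, taking $a_1=\dots=a_{m-1}=1$, $c_i=p_i$ and the remaining entries $p_m,\dots,p_{2m-1}$ in the invariance axiom, push the content of the first $m-1$ slots to the right:
\[
\varphi(p_1,\dots,p_{2m-1})=\varphi\big(\,\underbrace{1,\dots,1}_{m-1},\;p_m,\;p_{m-1}p_{m+1},\,p_{m-2}p_{m+2},\,\dots,\,p_1p_{2m-1}\big).
\]
Writing $r_j:=p_{m-j}p_{m+j}\ge0$ and applying invariance again to absorb one factor $\sqrt{r_j}$ from slot $m+j$ back into its partner slot $m-j$, this becomes
\[
\varphi(p_1,\dots,p_{2m-1})=\varphi\big(\sqrt{r_{m-1}},\dots,\sqrt{r_1},\;p_m,\;\sqrt{r_1},\dots,\sqrt{r_{m-1}}\big),
\]
which is of the shape $\varphi(b_1,\dots,b_{m-1},c,b_{m-1}^{*},\dots,b_1^{*})$ with $c=p_m\ge0$, hence $\ge0$ by positivity of $\varphi$. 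The even case $k=2m$ is identical with no middle entry, producing $\varphi(b_1,\dots,b_m,b_m^{*},\dots,b_1^{*})\ge0$.

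Granting the lemma, $\varphi$ is monotone on $k$-tuples of positive elements: for $0\le p_i\le q_i$, expanding $\varphi(q_1,\dots,q_k)-\varphi(p_1,\dots,p_k)$ telescopically in each variable yields a sum of terms each of which has all arguments positive, hence $\ge0$ by the lemma. Applying this to $0\le p_i\le\|p_i\|\,1$ and splitting a general $f\in C(X)$ into four positive pieces of norm at most $\|f\|$ shows $\varphi$ is bounded (with $\|\varphi\|\le 4^{k}\|\varphi(1,\dots,1)\|$, say); this is only needed so that the approximation below is legitimate, and may be skipped if elements of $IP$ are by definition bounded.

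Now fix $f_1,\dots,f_k\in C(X)$ with $\|f_i\|\le1$ and $\varepsilon>0$. By compactness of $X$, cover $X$ by finitely many open sets on each of which every $f_i$ oscillates by less than $\varepsilon$, take a subordinate partition of unity $\{\rho_j\}_{j=1}^{N}$ with $\rho_j\ge0$, $\sum_j\rho_j=1$, and sample $\lambda_{ij}:=f_i(x_j)$ (a point in the $j$-th set) with $|\lambda_{ij}|\le1$, so $\big\|f_i-\sum_j\lambda_{ij}\rho_j\big\|\le\varepsilon$. Multilinearly,
\[
\varphi\Big(\sum_{j}\lambda_{1j}\rho_j,\ \dots,\ \sum_j\lambda_{kj}\rho_j\Big)=\sum_{j_1,\dots,j_k}\Big(\prod_{l=1}^{k}\lambda_{l j_l}\Big)\,\varphi(\rho_{j_1},\dots,\rho_{j_k}),
\]
where every $\varphi(\rho_{j_1},\dots,\rho_{j_k})\ge0$ by the lemma and, since $\sum_j\rho_j=1$ in each slot, $\sum_{j_1,\dots,j_k}\varphi(\rho_{j_1},\dots,\rho_{j_k})=\varphi(1,\dots,1)$. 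As $\big|\prod_l\lambda_{l j_l}\big|\le1$, I would finish with the elementary fact that $\big\|\sum_{\nu}\alpha_\nu P_\nu\big\|\le\big\|\sum_\nu P_\nu\big\|$ whenever $P_\nu\ge0$ and $|\alpha_\nu|\le1$: for unit vectors $\xi,\eta$,
\[
\big|\langle\textstyle\sum_\nu\alpha_\nu P_\nu\,\xi,\eta\rangle\big|\le\sum_\nu\|P_\nu^{1/2}\xi\|\,\|P_\nu^{1/2}\eta\|\le\langle\textstyle\sum_\nu P_\nu\,\xi,\xi\rangle^{1/2}\langle\textstyle\sum_\nu P_\nu\,\eta,\eta\rangle^{1/2}\le\big\|\textstyle\sum_\nu P_\nu\big\|.
\]
This bounds the displayed sum by $\|\varphi(1,\dots,1)\|$; letting $\varepsilon\to0$ and using continuity of $\varphi$ gives $\|\varphi(f_1,\dots,f_k)\|\le\|\varphi(1,\dots,1)\|$, hence $\|\varphi\|=\|\varphi(1,\dots,1)\|$.

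I expect the only real obstacle to be the lemma: carrying out the two invariance moves with the indices bookkept correctly (which $\sqrt{r_j}$ lands in which slot, and checking that the middle argument in the odd case stays positive). Once $\varphi(\rho_{j_1},\dots,\rho_{j_k})\ge0$ is available, the remainder is the familiar ``partition of unity plus a positive-operator inequality'' argument underlying the commutative Russo--Dye theorem.
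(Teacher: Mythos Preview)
Your proof is correct and rests on the same two pillars as the paper's: positivity of $\varphi$ on tuples of nonnegative functions (obtained by the invariance--square-root trick), and a partition-of-unity reduction followed by the elementary estimate $\|\sum_\nu \alpha_\nu P_\nu\|\le\|\sum_\nu P_\nu\|$ for $P_\nu\ge 0$, $|\alpha_\nu|\le 1$ (which is exactly \cite[Lemma~2.3]{Paulsen} invoked in the paper).

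The organizational differences are worth noting. The paper first uses invariance to collapse $\varphi(f_1,\dots,f_k)$ to $\varphi(1,\dots,1,f,g)$ and then runs the partition-of-unity argument in only two variables, handling the degenerate case $\varphi(1,\dots,1)=0$ separately; this is why the paper writes out $k=3$ and $k=4$ as templates. You instead prove the positivity lemma $\varphi(p_1,\dots,p_k)\ge 0$ in full generality, which immediately yields monotonicity and a priori boundedness, and then run the partition-of-unity argument in all $k$ slots at once. Your route is more uniform in $k$, avoids the case split on $\varphi(1,\dots,1)=0$, and makes the boundedness hypothesis (needed to control the $\varepsilon$-error term) explicit rather than tacit; the paper's route has the advantage of touching only two free variables after the invariance reduction, so the approximation step is marginally lighter. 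Either way the substance is the same.
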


\begin{proof} 
 We prove this result for $k=3$ and $k=4$ as the same argument will follow for $k=2m$ and $k=2m-1$ for $m \geq 1$. Now, we proceed to prove for $k=3.$
 
(i): \noindent \textbf{Case 1:} Let $\varphi(1,1,1)=0$ and $f,g \in C(X)$ such that $g \geq 0$. Consider, $$\|f\|^2\varphi(1,g,1)-\varphi(f,g,\overline{f})=\varphi(1,g,\|f\|^2-f\overline{f})\geq 0.$$
Thus, $0 \leq \varphi(f,g,\overline{f}) \leq \|f\|^2\varphi(1,g,1)$. Now,
$$\|g\|\varphi(1,1,1)-\varphi(1,g,1)=\varphi(1,\|g\|-g,1)\geq 0.$$
Therefore, $\varphi(1,g,1) \leq \|g\| \varphi(1,1,1)$. This gives us $\varphi(1,g,1)=0$. So, $\varphi(f,g, \overline{f})=0$.

Let $f,g,h \in C(X)$. Then,
$$\varphi(f,g,h)=\varphi(1,g,fh)=\varphi(1,g,k)=\varphi(1,g,k_1+ik_2),$$ where $fh=k=k_1+ik_2$, $k_1, k_2 \in C(X)_{\mbox{sa}}$.
Now, 
$$\varphi(1,g,k_1+ik_2)=\varphi(1,g,k_1)+i\varphi(1,g,k_2).$$
We write $k_i=k_{i1}-k_{i2}$, where $k_{i1}, k_{i2} \geq 0$, $i=1,2$. Using the multilinearity of $\varphi$ and $\varphi(f,g,\overline{f})=0,$ we get $\varphi(f,g,h)=0$. Therefore $\|\varphi\|=\|\varphi(1,1,1)\|=0$. 

\noindent \textbf{Case 2:} Without loss of generality, let $0<\|\varphi(1,1,1)\| \leq 1$. Since $\varphi$ is invariant, we get $\|\varphi\|=\sup\{\|\varphi(1,f,g)\|:~\|f\| \leq 1,~\|g\|\leq 1,~~ f,g \in C(X)\}$. Let $f, g \in C(X)$, then for $\epsilon>0,$ there exist open covers $\{U_i\}_{i=1}^n$, $\{V_j\}_{j=1}^m$ of $X$ such that $|f(x)-f(x_i)|<\epsilon$ for all $x\in U_i$ and $|g(y)-g(y_j)|<\epsilon$ for all $y \in V_j$ and  $x_1,x_2,\ldots,x_n, y_1,y_2, \ldots, y_m \in X$. Let $\{p_i\}_{i=1}^n$ and $\{q_j\}_{j=1}^m$ are non-negative functions on $X$ satisfying $\mbox{Supp}~~ p_i \subseteq U_i$, $\mbox{Supp}~~q_j \subseteq V_j$, $\sum_{i=1}^n p_i=1$ and $\sum_{j=1}^m q_j=1$. Now,
\begin{align*}
\big\vert
f(x)-\sum_{i=1}^n f(x_i)p_i(x)\big\vert 
&= \big\vert\sum_{i=1}^n(f(x)-f(x_i))p_i(x)\big \vert \leq \epsilon \sum_{i=1}^n p_i(x)=\epsilon.
\end{align*}

Similarly, we get $\big\vert g(y)-\sum\limits_{j=1}^m g(y_j)q_j(y)\big\vert<\epsilon$ for all $y \in X$. Now,
\begin{align}\label{Equation: 6}
\notag \|\varphi(1,f,g)\| &\leq \|\varphi(1, f-\sum_{i=1}^nf(x_i)p_i,g)\|+\|\varphi(1, \sum_{i=1}^nf(x_i)p_i,g)\|  \\
&\leq \|\varphi\|\epsilon+\|\varphi(1,\sum_{i=1}^nf(x_i) p_i,g)\|.
\end{align}

Moreover, 
\begin{align}\label{Equation: 7}
	\notag &\Big\Vert \varphi(1,\sum_{i=1}^n f(x_i)p_i,g)\Big\Vert
	\leq \Big\Vert\varphi(1,\sum_{i=1}^nf(x_i)p_i,g-\sum_{j=1}^mg(y_j)q_j) \Big\Vert+\Big\Vert\sum_{j=1}^m \sum_{i=1}^n f(x_i)g(y_j)\varphi(1,p_i,q_j)\Big\Vert.
\end{align}
Notice that $\Vert\sum\limits_{i=1}^n f(x_i)p_i\Vert \leq \Vert f\Vert \sum\limits_{i=1}^{n}p_{i} \leq 1$. Thus, $\Big\Vert \varphi(1,\sum_{i=1}^n f(x_i)p_i,g-\sum_{j=1}^m g(y_j)q_j) \Big\Vert \leq \Vert \varphi \Vert \epsilon.$
Also, notice that $\sum\limits_{i,j=1}^{n,m} \varphi(1,p_i,q_j)=\varphi(1,\sum\limits_{i=1}^n p_i, \sum\limits_{j=1}^m q_j)=\varphi(1,1,1)\leq 1$ and $\left|f(x_i)\right|,\left|g(y_j)\right|<1$ for all $1 \leq i \leq n$, $1 \leq j \leq m$. By \cite{Paulsen}[Lemma 2.3], we have $\|\sum\limits_{i=1}^n\sum\limits_{j=1}^m f(x_i)g(y_j)\varphi(1,p_i,q_j)\| \leq 1$. Hence from Equation \eqref{Equation: 6} and Equation \eqref{Equation: 7}, we obtain $\|\varphi(1,f,g)\| \leq  \|\varphi \| \epsilon+\|\varphi\|\epsilon+1$. Since $\epsilon>0$ is arbitrary, we obtain $\|\varphi(1,f,g)\| \leq 1$.

Let $\|\varphi(1,1,1)\|=r \neq 0 \leq 1$. Clearly $\|\varphi(1,1,1)\| \leq \|\varphi\|$. Further, we have $\|\dfrac{1}{r}\varphi(1,1,1)\| \leq 1$. Now, using the above argument, we get $\|\dfrac{1}{r}\varphi\|\leq 1$. Hence $\|\varphi\|\leq \|\varphi(1,1,1)\|$. 

(ii): \noindent \textbf{Case 1:} Let $\varphi(1,1,1,1)=0$ and $f,g \in C(X)$. Consider $\|f\|^2\|g\|^2 \varphi(1,1,1,1)-\varphi(f,g,\overline{g},\overline{f})=\varphi(1,1,\|g\|^2,\|f\|^2)-\varphi(1,1,g\overline{g},f\overline{f})=\varphi(1,1,\|g\|^2-g\overline{g},\|f\|^2)+\varphi(1,1,g\overline{g},\|f\|^2-f\overline{f})\geq 0$. Since $\varphi(1,1,1,1)=0,$ we get $\varphi(f,g,\overline{g},\overline{f})=0$. Let $f,g,h,k \in C(X)$. Since $\varphi$ is invariant, we have $\varphi(f,g,h,k)=\varphi(1,1,gh,fk)$. Now using the same  argument of (i), we get $\varphi(f,g,h,k)=0$. Hence $\|\varphi\|=\|\varphi(1,1,1,1)\|=0$.

\noindent \textbf{Case 2:} Without loss of generality, let $0<\|\varphi(1,1,1,1)\| \leq 1$. Since $\varphi$ is invariant we get $\|\varphi\|=\sup\{\|\varphi(1,1,f,g)\Vert :~f,g \in C(X),~\|f\|,\|g\|\leq 1\}$. Let $f,g \in C(X)$, $\|f\|,\|g\|\leq 1$. Since $X$ is compact, for $\epsilon>0$, there exist finite open covers $\{U_i\}_{i=1}^n$, $\{V_j\}_{j=1}^m$ of $X$ and $x_i, y_j \in X$ for $1 \leq i \leq n$, for $1 \leq j \leq m$ such that 
$\left \vert f(x)-f(x_i)\right \vert<\epsilon$ for all $x\in U_i$, $\left \vert g(y)-g(y_j)\right \vert <\epsilon$ for all $y\in V_j$. Let $\{p_i\}_{i=1}^n$ and $\{q_j\}_{j=1}^m$ are non-negative functions on $X$ satisfying the conditions $\mbox{Supp}~~ p_i \subseteq U_i$, $\mbox{Supp}~~q_j \subseteq V_j$, $\sum\limits_{i=1}^n p_i=1$ and $\sum\limits_{j=1}^m q_j=1$.
By providing similar estimations to the case $(k=3),$ we get the following:
\begin{align*}
\|\varphi(1,1,f,g)\|  &\leq \|\varphi(1,1,f-\sum_{i=1}^nf(x_i)p_i,g)\|+\|\varphi(1,1,\sum_{i=1}^nf(x_i)p_i,g)\|\\
&\leq \|\varphi\|\epsilon+\|\varphi(1,1,\sum_{i=1}^nf(x_i)p_i,g-\sum_{j=1}^mg(y_j)q_j)\|+\|\sum_{i=1}^n\sum_{j=1}^mf(x_i)g(y_j)\varphi(1,1,p_i,q_j)\| \\
&\leq \|\varphi \| \epsilon+\|\varphi \| \epsilon +\|\sum_{i=1}^n\sum_{j=1}^mf(x_i)g(y_j)\varphi(1,1,p_i,q_j)\|\\
&\leq 2\|\varphi \| \epsilon+1.
\end{align*}
Since $\epsilon>0,$ we have $\|\varphi(1,1,f,g)\| \leq 1$. It follows that $\|\varphi\| \leq 1$. This completes the proof.
\end{proof}

We now present an example of an invariant multilinear positive map on matrix algebras.

\begin{ex}
The mapping $\varphi:~M_n(\mathbb{C})^3 \longrightarrow M_n(\mathbb{C})$ defined by 
\begin{align*}
\varphi(A_1,A_2,A_3)=\mbox{Tr}(A_1A_3)\mbox{Tr}(A_2)I,
\end{align*}
 is positive invariant multilinear map, where $\mbox{Tr}(A)$ denotes the normalized trace of $A$.
\end{ex}

The following result ensures the norm attainment of multilinear invariant positive maps on finite-dimensional $C^*$-algebras at the point $(I, I, \cdots ,I)$.

\begin{thm}\label{RDND}
Let $\varphi\in IP\big( M_n(\mathbb{C})^k , \mathcal{B(H)}\big),$  then $\|\varphi\|=\|\varphi(I,I, \cdots, I)\|.$
\end{thm}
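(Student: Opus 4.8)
The plan is to rerun the proof of Theorem~\ref{Thm: norm attainment theorem}, with the spectral decomposition of a unitary matrix into finitely many orthogonal eigenprojections playing the role that a partition of unity played in the commutative case; this is the natural ``exact'' substitute available over $M_n(\mathbb{C})$. I carry the argument out for $k=3$; the cases $k=4$ and general $k$ go through by the same scheme, with only the routine bookkeeping in the invariance identities changing, exactly as in Theorem~\ref{Thm: norm attainment theorem}. As in that proof, since a product of contractions is a contraction, the invariance axiom (used via $\varphi(A_1,A_2,A_3)=\varphi(I,A_2,A_1A_3)$) gives
\[
\|\varphi\|=\sup\bigl\{\,\|\varphi(I,B,C)\|:\ B,C\in M_n(\mathbb{C}),\ \|B\|\le 1,\ \|C\|\le 1\,\bigr\}.
\]
I would then reduce the two free contractions to unitaries, using the elementary fact that every $B\in M_n(\mathbb{C})$ with $\|B\|\le 1$ is the average of two unitaries: if $B=X\Sigma Y^{*}$ is a singular value decomposition with $X,Y$ unitary and $\Sigma=\mathrm{diag}(\cos\theta_1,\dots,\cos\theta_n)$, then $\Sigma=\tfrac12(D+D^{*})$ with $D=\mathrm{diag}(e^{i\theta_1},\dots,e^{i\theta_n})$ unitary, whence $B=\tfrac12\bigl(XDY^{*}+XD^{*}Y^{*}\bigr)$. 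By multilinearity this yields $\|\varphi\|=\sup\{\,\|\varphi(I,U,V)\|:\ U,V\in M_n(\mathbb{C})\text{ unitary}\,\}$.

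Next, fix unitaries $U,V$ and take their spectral decompositions $U=\sum_{a}u_aR_a$ and $V=\sum_{b}v_bS_b$, where $\{R_a\}$ and $\{S_b\}$ are families of mutually orthogonal projections with $\sum_a R_a=\sum_b S_b=I$ and $|u_a|=|v_b|=1$. By multilinearity,
\[
\varphi(I,U,V)=\sum_{a,b}u_av_b\,\varphi(I,R_a,S_b),\qquad \sum_{a,b}\varphi(I,R_a,S_b)=\varphi\Bigl(I,\sum_a R_a,\sum_b S_b\Bigr)=\varphi(I,I,I).
\]
Granting (see the next paragraph) that each $\varphi(I,R_a,S_b)$ is a positive operator, and since $|u_av_b|=1$, the estimate $\bigl\|\sum_k\lambda_kT_k\bigr\|\le\bigl\|\sum_k T_k\bigr\|$ valid for positive operators $T_k$ and scalars $\lambda_k$ with $|\lambda_k|\le 1$ --- that is, \cite[Lemma 2.3]{Paulsen}, already invoked in the proof of Theorem~\ref{Thm: norm attainment theorem} --- gives $\|\varphi(I,U,V)\|\le\|\varphi(I,I,I)\|$. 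Taking the supremum over all unitaries $U,V$ yields $\|\varphi\|\le\|\varphi(I,I,I)\|$, while the reverse inequality is immediate since $\|I\|\le 1$. Observe that, in contrast to Theorem~\ref{Thm: norm attainment theorem}, no separate discussion of the case $\varphi(I,I,I)=0$ is needed here, as it is absorbed into \cite[Lemma 2.3]{Paulsen}.

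It remains to show that $\varphi(I,R,S)\ge 0$ for all projections $R,S$, and this is where invariance re-enters. Since $S=S\cdot S$, the invariance axiom gives $\varphi(I,R,S)=\varphi(I,R,S\cdot S)=\varphi(S,R,S)$, and the triple $(S,R,S)$ has the palindromic form $(a_1,a_2,a_3)=(a_3^{*},a_2^{*},a_1^{*})$ with self-adjoint middle term $a_2=R\ge 0$; hence $\varphi(S,R,S)\ge 0$ by the very definition of a positive multilinear map. For general $k$ the same maneuver applies: after the invariance reduction and the unitary/spectral reduction one is left with $\varphi$ evaluated at a tuple all of whose free slots are projections, and by realizing each projection lying in a ``rear'' slot as $R=R\cdot R$ and pulling one factor, via invariance, onto its mirror-image ``front'' slot (leaving untouched any projection already in the central slot, which occurs when $k$ is odd), one converts the tuple into the palindromic configuration appearing in the definition of positivity --- with a positive central entry in the odd case --- so that the value is a positive operator. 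I expect this last invariance bookkeeping, converting $(I,\dots,I,R_0,R_1,\dots)$ to the palindromic shape, to be the only genuinely delicate step; the remainder is simply a transcription of the proof of Theorem~\ref{Thm: norm attainment theorem}, with partitions of unity replaced by the spectral resolutions of unitary matrices.
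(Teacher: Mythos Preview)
Your proposal is correct and follows essentially the same route as the paper: invariance reduces to $\varphi(I,B,C)$, contractions are averaged from unitaries via the singular value decomposition, unitaries are spectrally decomposed into projections, and the key positivity $\varphi(I,R,S)=\varphi(S,R,S)\ge 0$ is obtained by moving one factor of $S=S\cdot S$ through invariance. The only cosmetic difference is that the paper packages the final norm estimate by exhibiting the positive $2\times 2$ block matrix $\begin{bmatrix}\varphi(I,I,I)&\varphi(I,U,V)\\ \varphi(I,U,V)^{*}&\varphi(I,I,I)\end{bmatrix}$ and invoking \cite[Lemma~3.1]{Paulsen}, whereas you apply \cite[Lemma~2.3]{Paulsen} directly; these are interchangeable formulations of the same elementary fact.
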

\begin{proof}
 We show this result for $k=3$ and $k=4$ as the same line of arguments will follow for $k=2m$ and $k=2m-1$ for $m \geq 1$. Now, we proceed to prove for $k=3.$

\noindent \textbf{Case 1:} Notice that
  \begin{align*}
  \|\varphi\|&=\displaystyle\sup_{\|A_i\|\leq 1} \|\varphi(A_1,A_2,A_3)\| =\sup_{\|A_i\| \leq 1} \|\varphi(I,A_2,A_1A_3)\| ~~~~~~(\text{ as } \varphi \text{ is invariant }).
  \end{align*}
  Let $U$, $V$ are two unitary matrices, then $U=\displaystyle\sum_{i=1}^n\lambda_i P_i$, $V=\displaystyle\sum_{j=1}^m\mu_j Q_j$, where $P_i$ and $Q_j$ are projections such that $\displaystyle \sum_{i=1}^n P_i=\sum_{j=1}^m Q_j=I$. Thus, we get $\varphi(I,U,V)=\displaystyle \sum_{i,j=1}^{n,m} \lambda_i \mu_j\varphi(I,P_i,Q_j)$. Since  $\varphi$ is invariant, we have $\varphi(I,P_i,Q_j)=\varphi(Q_j,P_i,Q_j) \geq 0$. Now, $\varphi(I,I,I)=\displaystyle \sum_{i,j=1}^{n,m} \varphi(I,P_i,Q_j)$. 
  Now, it is immediate to see that
   $$\begin{bmatrix}
   \varphi(I,I,I)   & \varphi(I,U,V)\\
    \varphi(I,U,V)^*&  \varphi(I,I,I) 
  \end{bmatrix} \geq 0.$$
  Invoking  \cite[Lemma 3.1]{Paulsen} we observe that $\|\varphi(I,U,V)\| \leq \|\varphi(I,I,I)\|$. Using the singular value decomposition, we know that any matrix $A \in M_n(\mathbb{C})$ can be written as an average of two unitary matrices. So, we conclude that $\|\varphi(I,A,B)\| \leq \|\varphi(I,I,I)\|$, where $A,~B\in M_n(\mathbb{C})$. Hence $\|\varphi\|=\|\varphi(I,I,I)\|$.
 
 \noindent \textbf{Case 2:} Here $\|\varphi\|=\displaystyle\sup_{\|A_i\|\leq 1} \|\varphi(A_1,A_2,A_3,A_4)\|=\sup_{\|A_i\| \leq 1} \|\varphi(I,I,A_2A_3,A_1A_4)\|$. Following the above argument, we obtain the desired result. This completes the proof.
\end{proof}
Recall that every positive linear map between two $C^*$-algebras is symmetric. In the next result, we show that this result can be extended to invariant positive multilinear maps. We could not find this observation
 in the literature \cite{Heo 00, Heo}. 
  \begin{proposition}\label{Prop: positivity implies symmetric}
Let $\varphi\in IP\big( \mathcal{A}^k , \mathcal{B}\big).$ Then $\varphi$ is symmetric, that is, $\varphi^*=\varphi$.
\end{proposition}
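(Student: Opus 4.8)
The plan is to show $\varphi^*(a_1,\dots,a_k)=\varphi(a_1,\dots,a_k)$ for all $a_i\in\A$, where by definition $\varphi^*(a_1,\dots,a_k)=\varphi(a_k^*,\dots,a_1^*)^*$. I would treat the odd case $k=2m-1$ and the even case $k=2m$ in parallel, since the structural ideas are identical; let me describe $k=3$ and $k=4$ as representative, the general case following verbatim. The key reduction is that, by invariance, every argument tuple can be pushed into a \emph{normal form} in which all but the middle slot(s) are equal to the unit: for $k=3$, $\varphi(a_1,a_2,a_3)=\varphi(1,a_2,a_1a_3)$, and for $k=4$, $\varphi(a_1,a_2,a_3,a_4)=\varphi(1,1,a_2a_3,a_1a_4)$. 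So it suffices to prove symmetry on tuples of the form $(1,b,c)$ (resp.\ $(1,1,b,c)$), and in fact by multilinearity and the decomposition of an arbitrary element into a linear combination of at most four positive elements, it suffices to check the identity when $b,c$ are positive.

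First I would record the basic positivity consequence: if $(a_1,\dots,a_k)=(a_k^*,\dots,a_1^*)$ (with $a_m\ge 0$ when $k$ is odd), then $\varphi(a_1,\dots,a_k)\ge 0$, hence in particular is self-adjoint. For $k=3$ this says $\varphi(f,g,f^*)=\varphi(f,g,f^*)^*$ whenever $g\ge 0$; combined with the invariance move $\varphi(f,g,f^*)=\varphi(1,g,ff^*)$ this already handles a dense-looking family. The cleanest route, though, is a polarization argument. Fix $g\ge 0$. The map $(x,y)\mapsto \varphi(1,g,xy)$ (using invariance to absorb) is, up to the invariance normal form, controlled by the values $\varphi(x^*,g,x)$ which are self-adjoint and $\ge 0$ by the positivity hypothesis. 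Writing $\varphi(1,g,xy)$ via the polarization identity $4xy = \sum_{t=0}^{3} i^t (x^*+i^t y)^*(x^*+i^t y)$ — more precisely applying polarization to the sesquilinear-type form $(x,y)\mapsto \varphi(x^*,g,y)$, which by invariance equals $\varphi(1,g, x y)$ only after the appropriate rewriting, so I'd be careful to polarize the form $B(x,y):=\varphi(y^*, g, x)$ whose ``diagonal'' $B(x,x)=\varphi(x^*,g,x)\ge0$ is self-adjoint — one expresses a general value as a combination of self-adjoint diagonal values with coefficients $i^t$, and then a direct computation of $B(x,y)^* $ versus $B^*$ gives $\varphi^*=\varphi$ on the normal-form tuples. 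Feeding this back through the invariance reductions and multilinearity yields $\varphi^*=\varphi$ on all of $\A^k$.

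For the even case $k=2m$ I would do the same: reduce to $\varphi(1,\dots,1,b,c)$ with $b,c$ positive via invariance, then polarize the form $B(x,y):=\varphi(1,\dots,1, y^* x, \cdot)$ — actually here two of the middle entries interact, so I'd polarize in the combined variable, using $(x_1,\dots,x_m)=(x_m^*,\dots,x_1^*)$ type tuples whose $\varphi$-values are self-adjoint, and assemble a general value from these by multilinearity. The bookkeeping of which slots get conjugated and in what order is the only subtlety.

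The main obstacle I anticipate is precisely this bookkeeping in the polarization step: matching the slot-reversal-and-adjoint in the definition of $\varphi^*$ with the invariance relation (which reverses order and moves the $c_i$ across the middle) so that the ``diagonal'' terms genuinely fall under the positivity hypothesis — in particular ensuring the middle entry stays $\ge 0$ in the odd case after one does the polarization substitution. Once the normal form and the correct sesquilinear form are pinned down, the self-adjointness of the diagonal values together with multilinearity makes $\varphi^*=\varphi$ essentially automatic.
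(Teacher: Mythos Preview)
Your approach is correct and essentially the same as the paper's: reduce via invariance to the normal form $(1,b,c)$ (resp.\ $(1,1,b,c)$), observe that positivity makes the ``diagonal'' values $\varphi(f,g,f^*)$ (resp.\ $\varphi(f,g,g^*,f^*)$) self-adjoint, and then extend by multilinearity to arbitrary entries. The only cosmetic difference is that the paper carries out the last extension by decomposing $b,c$ into positive parts (implicitly using the square-root trick $\varphi(1,g,c)=\varphi(c^{1/2},g,c^{1/2})$ you also noted), whereas you package the same content as a polarization identity for the sesquilinear form $B(x,y)=\varphi(y^*,g,x)$.
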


\begin{proof} It is enough to prove for $k=3,4.$  

(i.) Let $\varphi\in IP\big( \mathcal{A}^3 , \mathcal{B}\big).$ Since $\varphi$ is invariant, it is enough to show that $\varphi^*(1,x_2,x_3)=\varphi(1,x_2,x_3)$, where $x_2, x_3 \in \mathcal{A}$. Let $x_2,x_3 \in \mathcal{A}_{\mbox{sa}}$. We can write $x_2=x_2^1-x_2^2$ and $x_3=x_3^1-x_3^2$, where $x_2^i, x_3^i \geq 0$, $i=1,2$. Now,
\begin{align*}
\varphi(1,x_2,x_3)^*&=\varphi(x_3^1-x_3^2,x_2^1-x_2^2,1 )^* \quad\quad (\text{Since~} \varphi \text{ is invariant})\\
&=\varphi(x_3^1,x_2^1,1)^*-\varphi(x_3^2,x_2^1,1)^*-\varphi(x_3^1,x_2^2,1)^*+\varphi(x_3^2,x_2^2,1)^*\\
&=\varphi(x_3^1,x_2^1,1)-\varphi(x_3^2,x_2^1,1)-\varphi(x_3^1,x_2^2,1)+\varphi(x_3^2,x_2^2,1)\\
&=\varphi(x_3^1-x_3^2,x_2^1-x_2^2,1)\\
&=\varphi(1,x_2,x_{3}).\quad\quad (\text{Since~} \varphi \text{ is invariant})
\end{align*}
It follows that $\varphi(1,x_2,x_3)^*=\varphi(x_3,x_2,1)^*=\varphi(1,x_2,x_3)=\varphi(x_3,x_2,1)=\varphi^*(1,x_2,x_3)$. 

Let $u, v\in \mathcal{A}$, $u=u_1+iu_2$, $v=v_1+iv_2$, where $u_i, v_i \in \mathcal{A}_{\mbox{sa}},~i=1,2$. Now,
\begin{align*}
\varphi(1,u^*,v^*)^*&=\varphi(1,u_1-iu_2,v_1-iv_2)^*\\
&=\varphi(1,u_1,v_1)^*
+i\varphi(1,u_1,v_2)^*+i\varphi(1,u_2,v_1)^*-\varphi(1,u_2,v_2)^*\\
&=\varphi^*(1,u_1,v_1)+i\varphi^*(1,u_1,v_2)+i\varphi^*(1,u_2,v_1)-\varphi^*(1,u_2,v_2)=\varphi^*(1,u,v).
\end{align*}
Therefore, we get $\varphi^*(1,u,v)=\varphi(1,u^*,v^*)^*=\varphi(1,u,v)$.\\
(ii.) Let $\varphi\in IP\big( \mathcal{A}^4 , \mathcal{B}\big).$ Since $\varphi$ is invariant, it is enough to show that $\varphi^*(1,1,x_3,x_4)=\varphi(1,1,x_3,x_4)$, where $x_3, x_4 \in \mathcal{A}$. Let $x_3,x_4 \in \mathcal{A}_{\mbox{sa}}$. We can write $x_3=x_3^1-x_3^2$ and $x_4=x_4^1-x_4^2$, where $x_3^i, x_4^i \geq 0$, $i=1,2$.  Now,
\begin{align*}
\varphi(1,1,x_3,x_4)^*
&=\varphi(1,1,x_3^1,x_4^1)^*-\varphi(1,1,x_3^1,x_4^2)^*-\varphi(1,1,x_3^2,x_4^1)^*+\varphi(1,1,x_3^2,x_4^2)^*\\
&=\varphi(1,1,x_3^1,x_4^1)-\varphi(1,1,x_3^1,x_4^2)-\varphi(1,1,x_3^2,x_4^1)+\varphi(1,1,x_3^2,x_4^2)\\
&=\varphi(1,1,x_3^1-x_3^2, x_4^1-x_4^2)\\
&=\varphi(x_4,x_3,1,1). 
\end{align*}
Thus, $\varphi(1,1,x_3,x_4)^*=\varphi(x_4,x_3,1,1)=\varphi(x_4,x_3,1,1)^*=\varphi^*(1,1,x_3,x_4)$. Let $u, v\in \mathcal{A}$, $u=u_1+iu_2$, $v=v_1+iv_2$, where $u_i, v_i \in \mathcal{A}_{\mbox{sa}},~i=1,2$. Now,
we see the following computations.
\begin{align*}
\varphi(1,1,u^*,v^*)^*
&=\varphi(1,1,u_1,v_1)^*+i\varphi(1,1,u_1,v_2)^*+ i\varphi(1,1,u_2,v_1)^*-\varphi(1,1,u_2,v_2)^*\\
&=\varphi^*(1,1,u_1,v_1)+i\varphi^*(1,1,u_1,v_2)+i\varphi^*(1,1,u_2,v_1)-\varphi^*(1,1,u_2,v_2)\\
&=\varphi^*(1,1,u,v).
\end{align*}
So we get $\varphi^*(1,1,u,v)=\varphi(1,1,u^*,v^*)^*=\varphi(1,1,u,v)$.
This completes the proof.
\end{proof}

We show that every multilinear invariant  CP map is completely bounded. This result is a multilinear version of \cite[Proposition 3.6]{Paulsen}. Possibly, it was unknown in the literature \cite{Heo 00, Heo}. 
\begin{proposition}\label{invariant CP implies CB}
Let $\varphi\in ICP\big( \mathcal{A}^k , \mathcal{B}\big).$ Then the following properties hold.
	\begin{enumerate}
		\item[(i)] If $k=3$, then $\varphi$ is completely bounded map and $\|\varphi\|_{\mbox{cb}}\leq 2^4\|\varphi(1,1,1)\|$.
		\item[(ii)] If $k=4$, then $\varphi$ is completely bounded map and $\|\varphi\|_{\mbox{cb}}\leq 2^4\|\varphi(1,1,1,1)\|$.
	\end{enumerate}
\end{proposition}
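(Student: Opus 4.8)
The plan is to reduce the multilinear estimate to the scalar (linear) Russo–Dye/Wittstock machinery by freezing the "middle" variables at the identity and exploiting invariance, exactly as was done in the proofs of Theorems \ref{Thm: norm attainment theorem} and \ref{RDND}. For $k=3$, invariance gives $\|\varphi\|_{\mathrm{cb}}=\sup_t\sup\{\|\varphi_t(I,[a_{ij}],[b_{ij}])\|:\ \|[a_{ij}]\|\le 1,\ \|[b_{ij}]\|\le 1\}$ since $\varphi_t(A,B,C)$ can be rewritten (via the invariance identity, which lifts to amplifications because it is preserved entrywise under the summation formula for $\varphi_t$) in terms of $\varphi_t(I,B,AC)$, and an operator of norm $\le 1$ in $M_t(\mathcal A)$ is an average of two unitaries, so it suffices to bound $\|\varphi_t(I,U,V)\|$ for unitaries $U,V\in M_t(\mathcal A)$.

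The key structural observation is that the bilinear map $(B,C)\mapsto \varphi(\,\cdot\,,B,C)$ obtained by fixing the first slot at $1$ is, after using invariance in the form $\varphi(1,b,c)=\varphi(c^*,b,1)$ together with positivity, a genuinely positive object in the sense that matrices of the shape
\[
\begin{bmatrix}\varphi_t(I,B,I) & \varphi_t(I,B,V)\\ \varphi_t(I,B^*,V)^* & \varphi_t(I,B,I)\end{bmatrix}\ge 0
\]
when $B\ge 0$ and $V$ is a contraction; this is the amplified analogue of the $2\times 2$ positivity used in the proof of Theorem \ref{RDND}. From this and \cite[Lemma 3.1]{Paulsen} one gets $\|\varphi_t(I,B,V)\|\le \|\varphi_t(I,B,I)\|$ for $B\ge 0$, and then a four-term decomposition of an arbitrary contraction $B$ into positive pieces (writing $B=B_1-B_2+i(B_3-B_4)$ with each $B_j\ge 0$, $\|B_j\|\le 1$) produces the factor $2^2$; a further averaging of $V$ into two unitaries and of the first variable (in the reduction step) contributes the remaining powers of $2$, giving $\|\varphi_t(I,B,V)\|\le 2^4\|\varphi_t(I,I,I)\|$. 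Finally $\varphi_t(I,I,I)=[\sum_{r}\varphi(\delta_{ir}1,1,\delta_{rj}1)]$ has the same norm as $\varphi(1,1,1)$ because $\varphi(1,1,1)\ge 0$ forces $\varphi_t(I,I,I)$ to be a diagonal amplification of $\varphi(1,1,1)$; this is where one uses that $\varphi$, being invariant and CP, is symmetric (Proposition \ref{Prop: positivity implies symmetric}) so that the cross terms $\varphi(1,1,\text{off-diagonal})$ vanish as in Case 1 of Theorem \ref{Thm: norm attainment theorem}. Taking the supremum over $t$ yields $\|\varphi\|_{\mathrm{cb}}\le 2^4\|\varphi(1,1,1)\|$, in particular finiteness, i.e. complete boundedness. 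The case $k=4$ is identical after freezing the first \emph{two} slots at $I$ and using $\varphi_t(A,B,C,D)=\varphi_t(I,I,BC,AD)$.

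The main obstacle I anticipate is the bookkeeping in the reduction step: verifying that the invariance identity and the symmetry of $\varphi$ survive amplification in the precise index-summed form of $\varphi_t$, so that the clean rewriting $\varphi_t(A,B,C)=\varphi_t(I,B,AC)$ and the $2\times2$ positivity above are literally valid at every level $t$; this requires writing out the matrix-entry formula for $\varphi_t$ and checking the identities slot by slot, which is routine but error-prone. A secondary point to be careful about is ensuring the constant is uniform in $t$ — all four factors of $2$ come from averaging/decomposition steps whose multiplicities ($2$ unitaries, $4$ positive pieces) do not depend on $t$, so the bound $2^4\|\varphi(1,1,1)\|$ is genuinely a $\|\cdot\|_{\mathrm{cb}}$ bound and not merely a bound on each $\|\varphi_t\|$ with a $t$-dependent constant.
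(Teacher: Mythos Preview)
Your route is genuinely different from the paper's, and it also contains a real gap.

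\textbf{What the paper does.} The paper's argument is purely order–theoretic and never invokes unitaries or a $2\times 2$ positivity trick. After the reduction to $\varphi_n(1,x,y)$ it shows directly, for $x_i,y_j\ge 0$ in $M_n(\mathcal A)$, the sandwich
\[
0\le \varphi_n(1,x_i,y_j)\le \|x_i\|\,\|y_j\|\,\varphi_n(1,1,1),
\]
by writing the difference as $\varphi_n(1,\|x_i\|-x_i,\|y_j\|)+\varphi_n(1,x_i,\|y_j\|-y_j)$. Then the usual decomposition of each of $x,y$ into four positive pieces gives the factor $4\times 4=2^4$, and $\|\varphi_n(1,1,1)\|=\|\varphi(1,1,1)\|$ is immediate from the definition of the amplification. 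No spectral theory of unitaries, no $2$–positivity step, no appeal to symmetry is used.

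\textbf{Where your proposal goes wrong.} Two concrete problems:
\begin{enumerate}
\item The assertion ``an operator of norm $\le 1$ in $M_t(\mathcal A)$ is an average of two unitaries'' is false for a general unital $C^*$-algebra $\mathcal A$; that fact is specific to matrix algebras (via the singular value decomposition), which is exactly why it appears in the proof of Theorem~\ref{RDND} but not here. Fortunately you do not actually use it later --- your $2\times 2$ positivity already handles arbitrary contractions $V$ --- but as stated it is an incorrect step in your plan, and the accounting of the constant $2^4$ that you derive from it (``averaging of $V$ into two unitaries and of the first variable'') is accordingly spurious.
\item The ``routine but error--prone'' obstacle you flag is not routine: the identity $\varphi_t(A,B,C)=\varphi_t(I,B,AC)$ \emph{fails} in general. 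For instance with $t=2$, $A=I$, $C=E_{12}$, $B=E_{21}$, $D=I$ one computes $\varphi_2(E_{12},E_{21},I)=\varphi(1,1,1)E_{11}$ while $\varphi_2(I,E_{21},E_{12})=\varphi(1,1,1)E_{22}$. Entrywise invariance of $\varphi$ does not force invariance of $\varphi_t$, because the summation indices in the amplification couple the slots in the wrong way. So the very first reduction in your outline (and in the paper's proof) cannot be justified by the slot--by--slot check you propose.
\end{enumerate}

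If you discard the unitary step and instead argue as the paper does --- bound $\varphi_n(1,\,\cdot\,,\,\cdot\,)$ via the order sandwich above --- you recover the paper's proof with the same constant. Your $2\times 2$ trick, once the first issue is removed, would actually yield a sharper constant than $2^4$; but neither version is complete until the reduction to $\varphi_n(1,\,\cdot\,,\,\cdot\,)$ is properly justified, and that is a nontrivial point you have not yet addressed.
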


\begin{proof}
(i): Due to invariant property of $\varphi$, we can write, $$\|\varphi\|_{\mbox{cb}}=\sup\{\|\varphi_n\|:~n \in \mathbb{N}\},~~ \mbox{where}~~ \|\varphi_n\|=\sup\{\|\varphi_n(1,x,y)\|:~\|x\|,~\|y\| \leq 1\}.$$
Let $x,y \in M_n(\mathcal{A})_{\mbox{sa}}$, then we can find $x_{i}, y_{i}\geq 0,~ i=1,2$  such that $x=x_1-x_2,$ $y=y_1-y_2$ and
 $\Vert x_{i}\Vert \leq \Vert x\Vert ,$  $\Vert y_{i}\Vert \leq \Vert y\Vert$. Here,
$$\|x_i\|\|y_j\|\varphi_n(1,1,1)-\varphi_n(1,x_i,y_j)=\varphi_n(1,\|x_i\| -x_i,\|y_j\|)+\varphi_n(1,x_i, \|y_j\|-y_j)\geq 0,~~i,j=1,2.$$
So, $\varphi_n(1,x,y)=\varphi_n(1, x_1-x_2,y_1-y_2)=\varphi_n(1,x_1,y_1)+\varphi_n(1,x_2,y_2)-\varphi_n(1,x_1,y_2)-\varphi_n(1,x_2,y_1)$. Now,  $\Vert \varphi_{n}(1, x_{i}, y_{j})
\Vert \leq \Vert  x_{i}\Vert\Vert y_{j} \Vert  \Vert \varphi_{n}(1, 1, 1) \Vert 
  \leq \Vert  x\Vert\Vert y \Vert \Vert \varphi_{n}(1, 1, 1) \Vert.$ This implies that $\|\varphi_n(1,x,y)\| \leq 4 \|x\|\|y\|\|\varphi_n(1,1,1,)\|.$

Let $u,v \in M_n(\mathcal{A})$ such that $u=u_1+iu_2$ and $v=v_1+iv_2$, where $u_i, v_i \in M_n(\mathcal{A})_{\mbox{sa}}$ and $i=1,2$. Again by some simple calculations, we get $\|\varphi_n(1,u,v)\|\leq 16 \|u\|\|v\|\|\varphi_n(1,1,1)\|$. Hence $\|\varphi_n\| \leq 2^4\|\varphi_n(1,1,1)\|.$
Also, it is easy to notice that $\Vert \varphi_{n}(1, 1,1)\Vert=\Vert \varphi(1,1,1)\Vert.$ Therefore we have, $ \Vert \varphi\Vert_{cb} \leq2^{4}\Vert \varphi(1,1,1)\Vert.$

(ii): Due to invariant property of $\varphi$, we can write, $$\|\varphi\|_{\mbox{cb}}=\sup\{\|\varphi_n\|:~n \in \mathbb{N}\},~~ \mbox{where}~~ \|\varphi_n\|=\sup\{\|\varphi_n(1,1,x,y)\|:~\|x\|,~\|y\| \leq 1\}.$$
Let $x,y \in  M_n(\mathcal{A})_{\mbox{sa}}$, then $x=x_1-x_2$ and $y=y_1-y_2$, where $x_i,~y_i \geq 0,~i=1,2$. Note that
$$\|x_i\|\|y_j\|\varphi_n(1,1,1,1)-\varphi_n(1,1,x_i,y_j)=\varphi_n(1,1,\|x_i\|-x_i,\|y_j\|)+\varphi_n(1,1,x_i, \|y_j\|-y_j)\geq 0$$  for $i,j=1,2.$
So, $\varphi_n(1,1,x,y)=\varphi_n(1,1,x_1-x_2,y_1-y_2)=\varphi_n(1,1,x_1,y_1)+\varphi_n(1,1,x_2,y_2)-\varphi_n(1,1,x_1,y_2)-\varphi_n(1,1,x_2,y_1)$. By following the above arguments, we get $\|\varphi_n(1,1,x,y)\| \leq 4 \|x\|\|y\|\|\varphi_n(1,1,1,1)\|.$

Let $u,v \in M_n(\mathcal{A})$ such that $u=u_1+iu_2$ and $v=v_1+iv_2$, where $u_i, v_i \in M_n(\mathcal{A})_{\mbox{sa}},~~i=1,2$. Again by some simple calculations we obtain $\|\varphi_n(1,1,u,v)\|\leq 16 \|u\|\|v\|\|\varphi_n(1,1,1,1)\|$. It is easy to see that $\|\varphi_n\| \leq 2^4\|\varphi_n(1,1,1,1)\|= 2^4\|\varphi(1,1,1,1)\|$ for all $n\in \mathbb{N}.$ Therefore, $ \Vert \varphi\Vert_{cb} \leq2^{4}\Vert \varphi(1,1,1,1)\Vert.$
This completes the proof.
\end{proof}

 Next, we provide an example of a positive multilinear map on a commutative $C^*$-algebra that is not CP. This observation contrasts with the well-known theorem due to Stinespring \cite[Theorem 3.11]{Paulsen} that every positive map on a commutative $C^*$-algebra is automatically CP.
\begin{ex}\label{ex1}
Let $\varphi: (\mathbb{C}^2)^3 \to \mathbb{C}$ be an invariant map
 defined by $\varphi((a_1,a_2),(b_1,b_2),(c_1,c_2))=a_1b_1c_1$. Clearly $\varphi$ is multilinear. We can easily observe that $\varphi$ is positive. Now consider $\varphi_2:~M_2(\mathbb{C}^2) \times M_2(\mathbb{C}^2) \times M_2(\mathbb{C}^2) \longrightarrow M_2(\mathbb{C})$.  The $(1,1)$-th entry of
\begin{align*}
&\varphi\left(\begin{bmatrix} 
 (1,1)& &(0,0)\\
 (0,0)& &(0,0)
\end{bmatrix},
\begin{bmatrix} 
 (1,0)& &(1,0)\\
 (1,0)& &(1,0)
\end{bmatrix},
\begin{bmatrix} 
 (1,0)& &(-2,0)\\
 (-2,0)& &(4,0)
\end{bmatrix}\right),
\end{align*} is
 $-\varphi(e_1,e_1,e_1)$. This implies that  $\varphi_2$ is not positive as $\varphi(e_1,e_1,e_1)=1$.  Hence $\varphi$ is not CP.\end{ex}

Motivated by the Example~\ref{ex1}, we produce a family of invariant positive multilinear maps on a commutative domain that are not CP.

\begin{ex}
Let $X$ be a compact Hausdorff space and $x_0 \in X$. Consider $\varphi \in IP(C(X)^3, \mathbb{C})$ defined by $\varphi(f,g,h)=f(x_0)g(x_0)h(x_0)$. Following a similar type of calculation as in Example~\ref{ex1} we conclude that $\varphi$ is not CP.
\end{ex}


\section{Block multilinear CP maps} \label{Multi CP Maps}
We start this section with the definition of block multilinear CP maps.
For $n,m\geq 1$ and  fixed $k \geq 1$ set \begin{align}
M_{n}(\A)^k&:=\underbrace{M_{n}(\A)\times M_{n}(\A)\times \cdots \times M_{n}(\A)}_{k \text{-times}},\\
M_{m}\big(M_{n}(\A)\big)^k&:=\underbrace{M_{m}\big(M_{n}(\A)\big)\times M_{m}\big(M_{n}(\A)\big)\times \cdots \times M_{m}\big(M_{n}(\A)\big)}_{k \text{-times}}.
\end{align}
Let  $\varphi=[\varphi_{ij}]:M_{n}(\A)^k \to M_n(B(\mathcal{H}))$ be a multilinear map. Then the multilinear map $\varphi=[\varphi_{ij}]$ is called \emph{block} if
\begin{align}
[\varphi_{ij}]\left([a_{{1}_{ij}}], [a_{{2}_{ij}}],\ldots, [a_{{k}_{ij}}]\right):=\left[\sum_{r_1,r_2,\ldots,r_{k-1}=1}^n \varphi_{ij}(a_{{1}_{ir_1}}, a_{{2}_{r_1r_2}}, \ldots,a_{{k}_{r_{k-1}j}})\right].
\end{align}
A  block multilinear map $\varphi$ is called a \emph{positive map} if $$\varphi([a_{{1}_{ij}}],[a_{{2}_{ij}}],\ldots ,[a_{{k}_{ij}}]) \geq 0,$$ whenever $([a_{{1}_{ij}}],[a_{{2}_{ij}}],\ldots , [a_{{k}_{ij}}])=([a_{{k}_{ij}}]^*,[a_{{k-1}_{ij}}]^*,\ldots, [a_{{1}_{ij}}]^*)$ with an additional assumption that $[a_{{m}_{ij}}]$ is positive when $k=2m-1$. 
Given a multilinear map $\varphi=[\varphi_{ij}]:M_{n}(\A)^k \to M_n(\mathcal{B(H)}),$ the $t$-amplification map is a multilinear map
 $\varphi_t:M_{t}\big(M_{n}(\A)\big)^k \to M_t(M_n(B(\mathcal{H})))$ defined by 
 \begin{align}
 \varphi_t([A_{{1}_{ij}}],[A_{{2}_{ij}}],\ldots , [A_{{k}_{ij}}])=\left[\sum_{r_1,r_2,\ldots,r_{k-1}=1}^t \varphi_{ij}\left (A_{{1}_{ir_1}}, A_{{2}_{r_1r_2}}, \ldots, A_{{k}_{r_{k-1}j}}\right)
 \right]\end{align}
 for all $[A_{{1}_{ij}}],[A_{{2}_{ij}}],\ldots , [A_{{k}_{ij}}]\in M_{t}(M_{n}(\mathcal{B(H)})).$
 A block multilinear map $\varphi= [\varphi_{ij}]$ is called \emph{completely positive} (in short: CP) if $\varphi_t$ is positive for all $t \in \mathbb{N}$. 
 
   \begin{Def}
A  multilinear map $\varphi=[\varphi_{ij}]: M_{n}(\mathcal{A})^{k}\to M_{n}(\mathcal{B(H)})$ is is said to be  \emph{invariant}
 if it satisfies the following:
\begin{enumerate}
\item[(i.)]  $k=2m-1;$ for all $A_{1}, \hdots,A_{2m-1}, C_{1}, \hdots, C_{m-1} \in M_{n}(\mathcal{A}),$   $[\varphi_{ij}]$ satisfies
\begin{align*}
    [\varphi_{ij}]\big(A_{1}C_{1},\hdots, A_{m-1} C_{m-1} , A_{m}, A_{m+1}, \hdots,A_{2m-1}\big)
    =&[\varphi_{ij}]\big(A_{1},\hdots, A_{m-1}, A_{m}, C_{m-1}A_{m+1} , \hdots,C_{1}A_{2m-1}\big).
\end{align*} 

\item[(ii.)]  $k=2m;$ for all $A_{1}, \hdots,A_{2m}, C_{1}, \hdots, C_{m} \in M_{n}(\mathcal{A}),$ $[\varphi_{ij}]$ satisfies
\begin{align*}
     [\varphi_{ij}]\big(A_{1}C_{1},\hdots, A_{m} C_{m} , A_{m+1}, \hdots,A_{2m}\big)
    =&[\varphi_{ij}]\big(A_{1},\hdots, A_{m}, C_{m}A_{m+1} , \hdots,C_{1}A_{2m}\big).
\end{align*} 
\end{enumerate}
 \end{Def}
 The \emph{adjoint} of a multilinear block map $\varphi=[\varphi_{ij}]: M_{n}(\A)^k\to M_{n}(\mathcal{B(H)})$ is denoted and defined by
 $$
 \varphi^*\big(A_{1},A_{2},\ldots,A_{k}\big):=\varphi\big(A_{k}^*, A_{k-1}^*,\ldots, A_{1}^*\big)^*  
 $$
 for all  $A_{1},A_{2},\ldots,A_{k}\in M_{n}(\A).$ A block multilinear map $\varphi$ is called \emph{symmetric} if $\varphi^*=\varphi.$
 
  The next result shows that if a block map possess invariant property then each entry of the block map also possess invariant property.
   This observation is a key tool to prove Theorem~\ref{Theorem: BlockStinespring}.
 \begin{proposition}\label{invariant} Let $\varphi= [\varphi_{ij}]: M_{n}(\mathcal{A})^k \to M_{n}(\mathcal{B(H)})$ be an invariant block $k$-linear map. 
Then $\varphi_{ij}: \mathcal{A}^k \to \mathcal{B(H)}$ is an invariant $k$-linear map for $1\leq i,j \leq n.$
\end{proposition}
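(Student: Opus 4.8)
The plan is to extract the $(i,j)$-entry of the block identity by multiplying on the left and right by the matrix units $E_{ii},E_{jj}\in M_n(\mathbb C)\subseteq M_n(\mathcal A)$, and then to feed these matrix-unit sandwiches into the invariance relation for $\varphi=[\varphi_{ij}]$. Concretely, for fixed $i,j$ and elements $a_1,\dots,a_k\in\mathcal A$, I would test the block map on the $n\times n$ matrices $a_l E_{pq}$ (a scalar matrix unit scaled by an algebra element), since the block-multiplication formula collapses nicely on such inputs: $[\varphi_{rs}](a_1E_{i r_1},a_2E_{r_1r_2},\dots,a_kE_{r_{k-1}j})$ has only the $(i,j)$-entry surviving and it equals $\varphi_{ij}(a_1,\dots,a_k)$. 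So the single scalar $\varphi_{ij}(a_1,\dots,a_k)$ is recovered as an entry of a block evaluation, and any identity the block map satisfies descends to $\varphi_{ij}$.

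Then I would carry out the two cases separately. For $k=2m-1$: given $a_1,\dots,a_{2m-1},c_1,\dots,c_{m-1}\in\mathcal A$, set $A_l=a_lE_{i r_l}$-type matrices more carefully — better, choose the matrix arguments $A_1,\dots,A_{2m-1},C_1,\dots,C_{m-1}\in M_n(\mathcal A)$ so that the products telescope down the $i$-to-$j$ diagonal of matrix units. The cleanest choice: let $A_1=a_1E_{ii}$, $C_1=c_1 E_{ii}$, $A_2=a_2E_{ii}$, \dots continuing so that every matrix is supported on the $(i,i)$ slot except the last few which route the index over to $j$; in fact taking all matrices of the form (algebra element)$\cdot E_{ii}$ for the first $2m-2$ slots and (algebra element)$\cdot E_{ij}$ in the last slot makes every internal product land in the $(i,i)$ corner until the final transition to $(i,j)$. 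Applying the block invariance identity
\[
[\varphi_{rs}]\big(A_1C_1,\dots,A_{m-1}C_{m-1},A_m,\dots,A_{2m-1}\big)=[\varphi_{rs}]\big(A_1,\dots,A_m,C_{m-1}A_{m+1},\dots,C_1A_{2m-1}\big),
\]
and reading off the $(i,j)$-entry of both sides using the collapse above, yields exactly
\[
\varphi_{ij}(a_1c_1,\dots,a_{m-1}c_{m-1},a_m,\dots,a_{2m-1})=\varphi_{ij}(a_1,\dots,a_m,c_{m-1}a_{m+1},\dots,c_1a_{2m-1}),
\]
which is the invariance of $\varphi_{ij}$. The even case $k=2m$ is identical with the obvious shift in where the pairing of the $c$'s occurs.

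The main obstacle I anticipate is purely bookkeeping: I must pick the matrix-unit pattern for $A_1,\dots,A_k,C_1,\dots$ so that (a) each product $A_lC_l$ and each rearranged product $C_{m-l}A_{m+l}$ is again a single scaled matrix unit (so that the block-multiplication sums over $r_1,\dots,r_{k-1}$ genuinely collapse to one term), and (b) both sides of the invariance identity end up with their unique nonzero entry in position $(i,j)$ with the algebra-side coefficient matching the desired identity. This requires checking that $E_{ii}E_{ii}=E_{ii}$, $E_{ii}E_{ij}=E_{ij}$, etc., propagate correctly through the whole chain, and that no cross-terms appear. None of this is deep, but I would lay it out carefully for $k=3$ and $k=4$ first (matching the paper's style of doing small cases) and then remark that the general $k=2m-1$ and $k=2m$ cases follow by the same substitution.
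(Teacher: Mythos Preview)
Your proposal is correct and is essentially the same argument as the paper's: both feed rank-one matrices (algebra elements times matrix units) into the block invariance identity so that the internal sums in the block formula collapse to a single term, and then read off the $(i,j)$-entry. The only difference is cosmetic: the paper works out $k=3,4$ and $n=2$ with ad hoc choices of supports (e.g.\ for $\varphi_{12}$ it takes $A_1$ supported at $(1,2)$, $A_2,A_3$ at $(2,2)$, and $B$ diagonal), whereas your uniform routing $E_{ii},\dots,E_{ii},E_{ij}$ with $C_l=c_lE_{ii}$ handles all $(i,j)$ at once and scales directly to general $k$ and $n$.
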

\begin{proof}
Without loss of generality, we assume that $k=3,4$ and $n=2$.

\noindent \textbf{Case 1 ($k=3$):} Let $A_l=[a_{l_{ij}}],~l=1,2,3$ and $B=[b_{ij}]$. As $\varphi$ is invariant, we write $\varphi(A_1B,A_2,A_3)=\varphi(A_1,A_2,BA_3)$.
Choose $a_{l_{11}}=a_l,~l=1,2,3$ and $b_{11}=b$ and other entries are zero.
 From the above equation, we obtain $\varphi_{11}(a_1b,a_2,a_3)=\varphi_{11}(a_1,a_2,ba_3)$ for all $a_1, a_2, a_3, b \in \A$. Hence $\varphi_{11}$ is invariant. Now, choose $a_{1_{12}}=a_1$, $a_{l_{22}}=a_l$, $l=2,3$, $b_{11}=b_{22}=b$ and other entries are zero. From the above equation we get, $\varphi_{12}(a_1b,a_2,a_3)=\varphi_{12}(a_1,a_2,ba_3)$ for all $a_1, a_2, a_3, b \in \A$. So $\varphi_{12}$ is invariant. Next, choose $a_{1_{21}}=a_1$, $a_{l_{11}}=a_l$, $l=2,3$, $b_{11}=b_{22}=b$ and other entries are zero. From the above equation, we conclude that $\varphi_{21}(a_1b,a_2,a_3)=\varphi_{21}(a_1,a_2,ba_3)$ for all $a_1, a_2, a_3, b \in \A$. Hence $\varphi_{21}$ is invariant. Again consider $a_{l_{22}}=a_l$,  $l=1,2,3$, $b_{22}=b$ and other entries are zero. The same equation gives us $\varphi_{22}(a_1b, a_2, a_3)=\varphi_{22}(a_1, a_2, ba_3)$ for all $a_1, a_2, a_3, b \in \A$. Hence $\varphi_{22}$ is also invariant.
 
\noindent \textbf{Case 2 ($k=4$):} Let $A_l=[a_{l_{ij}}],~l=1,2,3$ and $B_m=[b_{m_{ij}}],~m=1,2$. As $\varphi$ is invariant, we write $\varphi(A_1B_1,A_2B_2,A_3,A_4)=\varphi(A_1,A_2,B_2A_3,B_1A_4)$.
 Choose $a_{l_{11}}=a_l,~l=1,2,3,4$, $b_{m_{11}}=b_m,~m=1,2$ and other entries are zero. From the above equation we get $\varphi_{11}(a_1b_1,a_2b_2,a_3,a_4)=\varphi_{11}(a_1,a_2,b_2a_3,b_1a_4)$ for all $a_1,a_2,a_3,a_4,b_1,b_2 \in \A$. Hence $\varphi_{11}$ is invariant. Next, choose $a_{1_{12}}=a_1$, $a_{l_{22}}=a_l,~l=2,3,4$ and $b_{m_{11}}=b_{m_{22}}=b_m,~m=1,2$ and other entries are zero. From the above equation we observe $\varphi_{12}(a_1b_1,a_2b_2,a_3,a_4)=\varphi_{12}(a_1,a_2,b_2a_3,b_1a_4)$ for all $a_1,a_2,a_3,a_4,b_1,b_2 \in \A$. Hence $\varphi_{12}$ is invariant.
Again choose $a_{l_{22}}=a_l,~l=1,2,3,4$  and $b_{m_{22}}=b_m,~m=1,2$ and other entries are zero.
Similarly we get $\varphi_{22}(a_1b_1,a_2b_{2},a_3,a_4)=\varphi_{22}(a_1,a_2,b_2a_3,b_1a_4)$ for all $a_1,a_2,a_3,a_4,b_1,b_2 \in \A$. Finally, consider $a_{1_{21}}=a_1$, $a_{l_{11}}=a_l,~l=2,3,4$ and $b_{m_{11}}=b_{m_{22}}=b_m,~m=1,2$ and other entries are zero. Using the above mentioned equation we conclude that $\varphi_{21}(a_1b,a_2b,a_3,a_4)=\varphi_{21}(a_1,a_2,b_2a_3,b_1a_4)$ for all $a_1,a_2,a_3,a_4,b_1,b_2 \in \A$. Thus $\varphi_{21}$ is invariant.
This completes the proof.
\end{proof}
 \label{converse}
 In the following example, we show that  the converse of Lemma~\ref{invariant} is not true in general. In other words, if each  $3$-linear map  $\varphi_{ij}: \mathcal{A}^{3}\to \mathcal{B(H)}$ is invariant for $1\leq i,j\leq 3,$ 
 then 
 the block $3$-linear map $\varphi= [\varphi_{ij}]: M_{2}(\mathcal{A})^3\to M_{2}(\mathcal{B(H)})$ may not be  invariant.

\begin{ex}
Define $\Psi:~\mathcal{A} \times \mathcal{A} \times \mathcal{A} \longrightarrow \mathcal{B(H)}$ by $$\Psi(a,b,c)=\pi_1(b)\pi_2(ac),$$ where $\pi_1$ and  $\pi_2$ are unital $*$-homomorphisms from $\mathcal{A}$ to $\mathcal{B(H)}$. 
Notice that  $\Psi$ is invariant and clearly $\Psi(1,1,1)=\pi_1(1)\pi_2(1) \neq 0$.
Now, we consider the block map $\varphi=[\varphi_{ij}]:M_2(\mathcal{A})\times M_2(\mathcal{A})\times M_2(\mathcal{A})\longrightarrow M_2(\mathcal{B(H)})$ , where $\varphi_{ij}=\Psi$ for all $1 \leq i,j \leq 2$.
If possible, let $\varphi$ is invariant. Then for $b,c\in \A$ with $b\neq 0,$ we notice that

\begin{align*}
\varphi\left(\begin{bmatrix} 
 1& &0\\
 0& &0
\end{bmatrix}
\begin{bmatrix} 
 0& &b\\
 b& &c
\end{bmatrix},
\begin{bmatrix} 
 1& &1\\
 2& &1
\end{bmatrix},
\begin{bmatrix} 
 1& &3\\
 0& &4
\end{bmatrix}\right)
= \varphi\left(\begin{bmatrix} 
 1& &0\\
 0& &0
\end{bmatrix},
\begin{bmatrix} 
 1& &1\\
 2& &1
\end{bmatrix},
\begin{bmatrix} 
 0& &b\\
 b& &c
\end{bmatrix}
\begin{bmatrix} 
 1& &3\\
 0& &4
\end{bmatrix}\right).
\end{align*}

From the above equation, we get, 
\begin{align*}
&\begin{bmatrix} 
 2b\Psi(1,1,1)& &10b \Psi(1,1,1)\\
 0& &0
\end{bmatrix}
= \begin{bmatrix} 
b \Psi(1,1,1)& &\Psi(1,1,7b+4c)\\
 0& &0
\end{bmatrix}.
\end{align*}
By comparing $(1,1)$-entries of the above matrix equation, we get $2b \Psi(1,1,1)=b\Psi(1,1,1)$. It follows that $\Psi(1,1,1) = 0,$ which is a contradiction. Therefore  $\varphi$ can not be invariant.
\end{ex}
\subsection{Stinespring theorem for invariant block multilinear CP maps}
Now we prove Stinespring theorem for block multilinear invariant CP maps.
\begin{thm}\label{Theorem: BlockStinespring} Let $\varphi= [\varphi_{ij}]\in ICP\big( M_{n}(\mathcal{A})^k , M_{n}(\mathcal{B(H)}\big).$ Then there exist a $m$-commuting family of unital  $*$-homomorphisms ${\pi^{\varphi}_{i}:\A\to \mathcal{B(K)}}$ for some Hilbert space $\mathcal{K}$ and $V_{i}\in \mathcal{B(H, K)} , 1\leq i\leq n$  such that
\begin{enumerate}
    \item[(i)] if $k = 2m-1$, then $\varphi_{ij}$ is of the form:
 $$ \varphi_{ij}(a_{1}, \hdots, a_{m-1}, a_{m}, a_{m+1}, \hdots, a_{2m-1}) = V_{i}^{\ast}\pi^{\varphi}_{1}(a_{m})\pi^{\varphi}_{2}(a_{m-1}a_{m+1}) \cdots \pi^{\varphi}_{m}(a_{1}a_{2m-1})V_{j},$$
 for all $a_{1}, a_{2}, \hdots, a_{2m-1} \in \mathcal{A}$.
    \item[(ii)] if $k = 2m$, then $\varphi_{ij}$ is of the form:
    \begin{equation*}
        \varphi_{ij}(a_{1}, \hdots, a_{m-1}, a_{m}, a_{m+1}, \hdots, a_{2m}) = V_{i}^{\ast}\pi^{\varphi}_{1}(a_{m}a_{m+1})\pi^{\varphi}_{2}(a_{m-1}a_{m+2}) \cdots \pi^{\varphi}_{m}(a_{1}a_{2m})V_{j},
    \end{equation*}
    for all $a_{1}, a_{2}, \hdots, a_{2m} \in \mathcal{A}$.
\end{enumerate}
\end{thm}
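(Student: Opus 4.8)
The plan is to reduce the block multilinear problem to a single linear completely positive map on a matrix amplification and then apply the classical Stinespring construction, exploiting the invariance to collapse the $k$-linear structure into the stated product of commuting $*$-homomorphisms. First I would use Proposition~\ref{invariant} (the ``Lemma~\ref{invariant}'' cited in the text) to know that each entry $\varphi_{ij}:\A^k\to\B(\cH)$ is invariant; combined with the block completely positive hypothesis this should give, for each fixed pair of indices, enough positivity to run a GNS-type argument. Concretely, for $k=2m-1$ I would define on the algebraic tensor product $\A^{\otimes m}\otimes \cH^{\oplus n}$ (equivalently $\A^{\otimes m}\otimes\C^n\otimes\cH$) the sesquilinear form
\[
\Big\langle \sum a_1\otimes\cdots\otimes a_m\otimes e_i\otimes h_i,\ \sum b_1\otimes\cdots\otimes b_m\otimes e_j\otimes k_j\Big\rangle
:=\sum_{i,j}\big\langle \varphi_{ij}(b_1^*,\ldots,b_{m-1}^*,b_m^*a_m,a_{m-1},\ldots,a_1)h_i,\,k_j\big\rangle,
\]
with the analogous pairing (no privileged middle slot) in the even case $k=2m$. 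The key point is that the block CP condition — applied to the $n$-fold amplification that corresponds to ranging over the indices $i,j$ — makes this form positive semidefinite; this is exactly where the ``invariant block CP'' hypothesis is needed rather than mere entrywise CP (cf. the counterexample following Proposition~\ref{invariant}).

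Next I would quotient by the null space and complete to obtain a Hilbert space $\cK$. The invariance relations let me define, for each slot $\ell=1,\ldots,m$, a $*$-homomorphism $\pi^\varphi_\ell:\A\to\B(\cK)$ by letting $\pi^\varphi_\ell(c)$ act as left multiplication by $c$ in the $\ell$-th tensor leg; invariance (moving $c$ across slots) is precisely what guarantees these are well defined on the quotient and that $\pi^\varphi_\ell(c)\pi^\varphi_{\ell'}(d)=\pi^\varphi_{\ell'}(d)\pi^\varphi_\ell(c)$ for $\ell\neq\ell'$, i.e. they form an $m$-commuting family; unitality of $\A$ and the construction give unital homomorphisms. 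Then I would set $V_j:\cH\to\cK$ to be $h\mapsto [\,1\otimes\cdots\otimes 1\otimes e_j\otimes h\,]$ (the class of the elementary tensor with all algebra entries equal to $1$), check boundedness from the norm estimate built into the form, and verify by direct expansion that
\[
V_i^*\,\pi^\varphi_1(a_m)\,\pi^\varphi_2(a_{m-1}a_{m+1})\cdots\pi^\varphi_m(a_1a_{2m-1})\,V_j
=\varphi_{ij}(a_1,\ldots,a_{2m-1}),
\]
using the invariance identities to rewrite $\varphi_{ij}(a_1,\ldots,a_{2m-1})=\varphi_{ij}(1,\ldots,1,a_m,a_{m-1}a_{m+1},\ldots,a_1a_{2m-1})$ before reading off the inner product; the even case is identical with $\pi^\varphi_1(a_ma_{m+1})$ in place of $\pi^\varphi_1(a_m)$.

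The main obstacle I anticipate is verifying positivity of the sesquilinear form: one must show that the block CP hypothesis, which a priori concerns $\varphi_t$ on $M_t(M_n(\A))^k$, implies positivity of the specific quadratic form above. The right bookkeeping is to recognize a generic element $\sum_r a_1^{(r)}\otimes\cdots\otimes a_m^{(r)}\otimes e_{i_r}\otimes h_{i_r}$ as encoding a tuple in an amplified matrix algebra and to feed into the definition of completely positive the tuple $(A_1,\ldots,A_k)$ with $A_\ell$ and $A_{k+1-\ell}^*$ matched (and $A_m\ge 0$ when $k$ odd), so that $\varphi_t(\ldots)\ge 0$ unwinds exactly to $\sum_{i,j}\langle\varphi_{ij}(\cdots)h_i,k_j\rangle\ge 0$ after compressing by the vector $(h_i)_i$. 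Getting the indices $r_1,\ldots,r_{k-1}$ in the definition of $\varphi_t$ to line up with the tensor-leg multiplications is the delicate computation; once it is in place, the rest (well-definedness, commutativity, the product formula) follows from repeated application of the invariance identities, and minimality/uniqueness are deferred to Theorem~\ref{Theorem: Minimal BlockStinespring}.
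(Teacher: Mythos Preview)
Your proposal is correct and follows essentially the same route as the paper: build a semi-inner product on $\A^{\otimes m}\otimes\mathcal{H}^n$ from the block CP data, quotient and complete to get $\mathcal{K}$, let $\pi^\varphi_\ell$ act by left multiplication in the $\ell$-th tensor leg, and define $V_j$ via the elementary tensor $1\otimes\cdots\otimes1\otimes e_j\otimes h$; the paper isolates the positivity step as a separate lemma and carries it out by packaging the data into block-diagonal matrices $D_p$ and a first-row matrix $R_1$ so that the quadratic form becomes $\langle\varphi_t(D_m^*,\ldots,R_1^*R_1,\ldots,D_m)f,f\rangle\ge 0$, which is exactly the ``bookkeeping'' you anticipate. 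One small caveat: in your displayed form the roles of $i$ and $j$ are swapped relative to the convention $\varphi_{ij}=V_i^*(\cdots)V_j$ (you want $\langle\varphi_{ij}(\cdots)h_j,k_i\rangle$, as the paper has it), and commutativity of the $\pi^\varphi_\ell$ is automatic from acting on distinct tensor legs---invariance is what you need for the adjoint relation $\pi^\varphi_\ell(a)^*=\pi^\varphi_\ell(a^*)$ and for the boundedness estimate, not for commutativity per se.
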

 Here the main difficulty to prove the above theorem is to find a proper sesquilinear form. The following result is the key ingredient to prove Theorem \ref{Theorem: BlockStinespring}. 
 
 \begin{lemma}\label{Lamma-Block multilienar Stinespring} Let $\varphi= [\varphi_{ij}]\in ICP( M_{n}(\mathcal{A})^k , M_{n}(\mathcal{B(H)}).$ Then there exists a semi-inner product $\langle . , . \rangle$ $\mathcal{A}^{\otimes m}\otimes
  \mathcal{H}^n  \times \mathcal{A}^{ \otimes m}\otimes  \mathcal{H}^n \to \mathbb{C}$ associate to $\varphi$ defined by
 \begin{align*}\label{kernel}
&\left \langle \sum_{l=1}^ta_{1_l} \otimes a_{2_l}\otimes\hdots \otimes a_{m_l}\otimes
 \begin{bmatrix}
 f_{l1}\\
 \vdots\\
 f_{ln}\\
\end{bmatrix}, \sum_{r=1}^t b_{1_r}\otimes b_{2_r}\otimes \hdots \otimes  b_{m_r}\otimes
 \begin{bmatrix}
 g_{r1}\\
 \vdots\\
 g_{rn}\\
\end{bmatrix} \right \rangle\\
:=&\left\{ \begin{array}{cc}
    \sum\limits_{l,r=1}^t\sum\limits_{i,j=1}^n  \big \langle \varphi_{ij}(b_{m_r}^*,\hdots ,b_{2_r}^*, b_{1_r}^*a_{1_l}, a_{2_l}, \hdots , a_{m_l})f_{lj} ,g_{ri} \big \rangle  &\text{if} \; k = 2m-1 \\
   &\\
    \sum\limits_{l,r=1}^t\sum\limits_{i,j=1}^n  \big \langle \varphi_{ij}(b_{m_r}^*,\hdots ,b_{2_r}^*, b_{1_r}^*, a_{1_l}, a_{2_l}, \hdots , a_{m_l})f_{lj} ,g_{ri} \big \rangle&\text{if} \; k = 2m, \\
\end{array} \right. 
\end{align*}
for all
 $\sum\limits_{l=1}^ta_{1_l}\otimes a_{2_l}\otimes\hdots \otimes a_{m_l}\otimes
 \begin{bmatrix}
 f_{l1}\\
 \vdots\\
 f_{lk}\end{bmatrix}, \sum\limits_{r=1}^t b_{1_r}\otimes b_{2_r}\otimes \hdots \otimes b_{m_r}\otimes
 \begin{bmatrix}
 g_{r1}\\
 \vdots\\
 g_{rk}\\
\end{bmatrix}\in {\A^{\otimes}}^m\otimes \mathcal{H}^{n}.$
\end{lemma}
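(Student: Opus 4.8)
The plan is to show that the prescribed formula genuinely defines a sesquilinear form and that it is positive semi-definite; conjugate symmetry and sesquilinearity are essentially bookkeeping, so the real content is positivity. First I would verify that the right-hand side is well-defined: each $\varphi_{ij}$ is a $k$-linear map on $\mathcal{A}^k$ (by Proposition \ref{invariant}, it is moreover invariant), so plugging in the words $b_{m_r}^\ast,\dots,b_{1_r}^\ast a_{1_l},\dots,a_{m_l}$ (or the length-$2m$ analogue) is legitimate, and the double sum over $l,r$ and over $i,j$ is finite. Sesquilinearity in each slot is then immediate from the multilinearity of the $\varphi_{ij}$ together with linearity of the inner product on $\mathcal{H}$, and the fact that the element of $\mathcal{A}^{\otimes m}\otimes\mathcal{H}^n$ is genuinely represented by such finite sums. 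For conjugate symmetry one uses that $\varphi=[\varphi_{ij}]$ is symmetric (which holds here because an invariant CP map is positive, hence symmetric by Proposition \ref{Prop: positivity implies symmetric} applied blockwise, or directly from the block positivity hypothesis): swapping the roles of the two tensors and of the indices $i\leftrightarrow j$ turns $\varphi_{ij}(b_{m_r}^\ast,\dots,a_{m_l})$ into $\overline{\varphi_{ji}(a_{m_l}^\ast,\dots,b_{m_r})}$, and the symmetry/adjoint identity $\varphi^\ast=\varphi$ closes the loop.

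The heart of the argument is positivity: given $\xi=\sum_{l=1}^t a_{1_l}\otimes\cdots\otimes a_{m_l}\otimes (f_{l1},\dots,f_{ln})^{\mathsf T}$, I must show $\langle\xi,\xi\rangle\ge 0$. The strategy is to repackage the double sum as a single application of the $t$-amplification $\varphi_t$ (or $\varphi_n$-type amplification) evaluated at a carefully chosen tuple of matrices built from the $a$'s, so that complete positivity of $\varphi$ delivers the inequality. Concretely, for $k=2m-1$ one wants to realize
\[
\langle\xi,\xi\rangle=\big\langle \varphi_{t}\big(A_m^\ast,\dots,A_2^\ast,A_1^\ast A_1,A_2,\dots,A_m\big)\,F,\,F\big\rangle
\]
for suitable $A_s\in M_t(M_n(\mathcal{A}))$ (with $A_s$ encoding the elements $a_{s_l}$ along a row or column) and a vector $F$ assembled from the $f_{lj}$; the tuple $(A_m^\ast,\dots,A_1^\ast A_1,\dots,A_m)$ has exactly the "palindromic" form $(B_k^\ast,\dots,B_1^\ast,B_1,\dots,B_k)$ with the middle entry $A_1^\ast A_1\ge 0$ required in the definition of block positivity, so $\varphi_t(\cdots)\ge 0$ and the scalar $\langle\varphi_t(\cdots)F,F\rangle$ is nonnegative. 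The invariant property of $\varphi$ is what lets one move the factors $b_{1_r}^\ast$ next to $a_{1_l}$ (and, in the even case, $b_{1_r}^\ast$ into the adjacent slot) so that the mixed $l,r$ terms assemble into a genuine matrix product rather than an unstructured sum — this is precisely why invariance is in the hypothesis.

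I expect the main obstacle to be the indexing: one has \emph{three} layers of indices — the summation index $l$ (or $r$) running from $1$ to $t$, the block index $i,j$ running from $1$ to $n$, and the $m$ tensor slots — and collapsing the $l,r$ and $i,j$ sums simultaneously into one amplified evaluation requires choosing the right sizes ($t\cdot n$ versus $t$, and where the $a_{s_l}$ sit inside an $M_t(M_n(\mathcal{A}))$ matrix) and checking that the combinatorial bookkeeping in the definition of $\varphi_t$ — the sum $\sum_{r_1,\dots,r_{k-1}}$ over intermediate indices — reproduces exactly the claimed double sum after one contracts against $F$. A clean way to organize this is to first treat $n=1$ (pure multilinear invariant CP, recovering Heo's form) to isolate the amplification argument, then observe that the block structure only adds the spectator indices $i,j$, which ride along unchanged. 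Finally, nonnegativity of $\langle\xi,\xi\rangle$ on all of $\mathcal{A}^{\otimes m}\otimes\mathcal{H}^n$ together with sesquilinearity and conjugate symmetry is exactly the assertion that $\langle\cdot,\cdot\rangle$ is a semi-inner product, completing the proof; the Cauchy--Schwarz inequality for semi-inner products is then available for free in the subsequent construction of $\mathcal{K}$ in Theorem \ref{Theorem: BlockStinespring}.
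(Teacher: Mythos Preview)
Your proposal is correct and follows essentially the same route as the paper: one rewrites $\langle\xi,\xi\rangle$ as $\big\langle\varphi_t(D_m^\ast,\dots,D_2^\ast,R_1^\ast R_1,D_2,\dots,D_m)F,\,F\big\rangle$ (odd case; analogous in the even case) for explicit matrices and then invokes complete positivity. The paper's concrete choices are $D_{p_l}=a_{p_l}\otimes I_n\in M_n(\mathcal{A})$ and $R_{1_l}\in M_n(\mathcal{A})$ with first row constantly $a_{1_l}$ and zeros elsewhere; these are then stacked block-diagonally (for the $D$'s) and as a first block-row (for $R_1$) at the $M_t$ level, which collapses both the $i,j$ and the $l,r$ sums in one shot.

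One small correction: invariance plays no role in the positivity step of this lemma. Because $R_{1_r}^\ast R_{1_l}$ already has $a_{1_r}^\ast a_{1_l}$ in \emph{every} entry and the $D$'s are diagonal, the block formula for $\varphi$ reproduces $\varphi_{ij}(a_{m_r}^\ast,\dots,a_{1_r}^\ast a_{1_l},\dots,a_{m_l})$ by pure matrix multiplication, with no shuffling of factors required. Invariance is used only later, in the proof of Theorem~\ref{Theorem: BlockStinespring}, to obtain the norm estimate for $\pi_p^\varphi(a)$ and to verify $\pi_p^\varphi(a)^\ast=\pi_p^\varphi(a^\ast)$.
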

\begin{proof} It is enough to show that  $\langle, . ,\rangle$ is positive semi-definite. To show this, we notice that
  \begin{align}
\notag &\Big \langle \sum_{l=1}^t a_{1_l}\otimes a_{2_l}\otimes\hdots \otimes a_{m_l}\otimes
 \begin{bmatrix}
 f_{l1}\\
 \vdots\\
 f_{lk}\end{bmatrix}, 
 \sum_{r=1}^t a_{1_r}\otimes a_{2_r}\otimes \hdots \otimes a_{m_r}\otimes
 \begin{bmatrix}
 f_{r1}\\
 \vdots\\
 f_{rk}\\
\end{bmatrix} \Big\rangle\\
\notag =&\left\{ \begin{array}{cc}
    \sum\limits_{l,r=1}^t \sum\limits_{i=1}^{n} \Big \langle \sum\limits_{j=1}^{n} \varphi_{ij}(a_{m_r}^*,\hdots, a_{2_r}^{\ast}, a_{1_r}^*a_{1_l}, a_{2_l}, \hdots, a_{m_l})f_{lj},\; f_{ri} \Big\rangle  &\text{if} \; k = 2m-1 \\
     \sum\limits_{l,r=1}^t \sum\limits_{i=1}^{n} \Big\langle \sum\limits_{j=1}^{n}\varphi_{ij}(a_{m_r}^*,\hdots, a_{2_r}^{\ast}, a_{1_r}^*, a_{1_l}, a_{2_l}, \hdots, a_{m_l})f_{lj},\; f_{ri} \Big\rangle &\text{if} \; k = 2m \\
\end{array} \right.\\
 =&\left\{ \begin{array}{cc}
    \sum\limits_{l,r=1}^t\Bigg\langle \Bigg[
     \varphi_{ij}(a_{m_r}^*, \hdots, a_{2_r}^*, a_{1_r}^*a_{1_l}, a_{2_l}, \hdots, a_{m_l})\Bigg]_{i,j=1}^{n} \begin{bmatrix}f_{l1}\\\vdots\\ f_{ln} \end{bmatrix}, \begin{bmatrix}f_{r1}\\\vdots\\ f_{rn} \end{bmatrix}\Bigg\rangle
      &\text{if}\; \; k = 2m-1 \\
\notag    &\\
\sum\limits_{l,r=1}^t    \Bigg\langle \Bigg[
     \varphi_{ij}(a_{m_r}^*, \hdots, a_{2_r}^*, a_{1_r}^*, a_{1_l}, a_{2_l}, \hdots, a_{m_l})\Bigg]_{i,j=1}^{n} \begin{bmatrix}f_{l1}\\\vdots\\ f_{ln} \end{bmatrix}, \begin{bmatrix}f_{r1}\\\vdots\\ f_{rn} \end{bmatrix}\Bigg\rangle
      &\text{if} \;\; k = 2m \\
\end{array} \right.\\
 =& \left\{ \begin{array}{cc}
     \sum\limits_{l,r,=1}^{t} \Big\langle  \varphi(D_{m_r}^{\ast}, \hdots,D_{2_r}^{\ast}, R_{1_r}^{\ast}R_{1_l}, D_{2_l}, \hdots, D_{m_l})f_{l}, \; f_{r} \Big\rangle &\text{if}  \; k = 2m-1 \\
     \sum\limits_{l,r,=1}^{t} \Big\langle  \varphi(D_{m_r}^{\ast}, \hdots,D_{2_r}^{\ast}, R_{1_r}^{\ast}, R_{1_l}, D_{2_l}, \hdots, D_{m_l})f_{l}, \; f_{r} \Big\rangle&\text{if}  \; k = 2m \\
\end{array} \right.
\end{align}
and for each $1\leq p\leq m, 1\leq l,r\leq t,$ $f_{l}, D_{p_l}$ and $R_{1_l}$ are defined by 
\begin{equation} \label{Eq: Essential}
    f_{l}= 
\begin{bmatrix}
f_{l1}\\
\vdots\\
f_{ln}
\end{bmatrix}, \; D_{p_r}= \begin{bmatrix}
a_{p_r}& 0 &\hdots &0\\
0& a_{p_r}&\hdots &0\\
\vdots&\vdots&\ddots&\vdots\\
0&0&\hdots &a_{p_r}
\end{bmatrix} \; \text{and}\; R_{1_l}= 
\begin{bmatrix}
a_{1_l}& a_{1_l}\hdots& a_{1_l}\\
0     &\hdots       &0\\
\vdots&\ddots&\vdots\\
0     &\hdots       &0\\
\end{bmatrix}.
\end{equation}
Now from Equation \eqref{kernel}, we can rewrite as
\begin{align}\label{Eq: phi is cp}
\notag&\left \langle \sum_{l=1}^ta_{1_l} \otimes a_{2_l}\otimes\hdots \otimes a_{r_l}\otimes
 \begin{bmatrix}
 f_{l1}\\
 \vdots\\
 f_{ln}\\
\end{bmatrix}, \sum_{r=1}^t a_{1_r}\otimes a_{2_r}\otimes \hdots \otimes  a_{m_r}\otimes
 \begin{bmatrix}
 f_{r1}\\
 \vdots\\
 f_{rn}\\
\end{bmatrix} \right \rangle\\
 =& \left\{ \begin{array}{cc}
    \Big\langle  \varphi_{t}(D_{m}^{\ast}, \hdots,D_{2}^{\ast}, R_{1}^{\ast}R_{1}, D_{2}, \hdots, D_{m})f, \; f \Big\rangle &\text{if} \; k = 2m-1 \\
     \Big\langle  \varphi_{t}(D_{m}^{\ast}, \hdots,D_{2}^{\ast}, R_{1}^{\ast}, R_{1}, D_{2}, \hdots, D_{m})f, \; f \Big\rangle&\text{if} \; k = 2m, \\
\end{array} \right.
\end{align}
 where 
 
 \begin{align}\label{Eq: Dp, R1}
 f= \begin{bmatrix}
 f_{1}\\
\vdots\\
f_{t}
 \end{bmatrix},\quad
  D_{p}= \begin{bmatrix}
D_{p_1}& 0 &\hdots &0\\
0& D_{p_2}&\hdots &0\\
\vdots&\vdots&\ddots&\vdots\\
0&0&\hdots &D_{p_t}
\end{bmatrix} \text{ and } R_{1}= \begin{bmatrix}
R_{1_1}& R_{1_2}\hdots& R_{1_t}\\
0     &\hdots       &0\\
\vdots&\ddots&\vdots\\
0     &\hdots       &0\\
\end{bmatrix}.\end{align}
Since $\varphi$ is CP, from Equation \eqref{Eq: phi is cp}, we conclude that $\langle . , .\rangle$ is positive semi-definite.
\end{proof}
Now, we prove Theorem \ref{Theorem: BlockStinespring} using the developed machinery.

\begin{proof}[{\bf Proof of Theorem \ref{Theorem: BlockStinespring}}] Let us define a closed set $\mathcal{N}_{\varphi}=\{x\in \mathcal{A}^{\otimes m}\otimes \mathcal{H}^n : \langle x, x \rangle =0 \}.$ Then we can construct a Hilbert space as $\mathcal{K}^{\varphi}: = 
\overline{(\mathcal{A}^{\otimes m}\otimes \mathcal{H}^n )/{\mathcal{N}_{\varphi}}} .$ For $1\leq p\leq m$ and  $a\in \mathcal{A},$ we define a linear map
$\pi^{\varphi}_{p}(a):(\mathcal{A}^{\otimes m}\otimes \mathcal{H}^n)/\mathcal{N}_{\varphi} \to (\mathcal{A}^{\otimes m}\otimes \mathcal{H}^n)/\mathcal{N}_{\varphi}$ by
\begin{align}\label{Eq: norm estimate of pi map}
  \notag \pi_{p}^{\varphi}(a)\Big(\sum_{l=1}^t a_{1_{l}}\otimes\hdots\otimes a_{p_{l}}\otimes & \hdots\otimes a_{m_{l}}\otimes 
  \begin{bmatrix}f_{l1}\\
  \vdots\\
  f_{ln}
  \end{bmatrix}
  +\mathcal{N}_{\varphi} \Big)\\
 &=\sum_{l=1}^t a_{1_{l}}\otimes\hdots\otimes aa_{p_{l}}\otimes   \hdots\otimes a_{m_{l}}\otimes 
  \begin{bmatrix}f_{l1}\\
  \vdots\\
  f_{ln}
  \end{bmatrix}
  +\mathcal{N}_{\varphi} 
  \end{align}
  for all $a_{p_{l}} \in \mathcal{A},\; f_{lj} \in \mathcal{H},\; l \in \{1,\hdots, t\}.$ Using similar computation as in  Lemma \ref{Lamma-Block multilienar Stinespring}, notice that
  \begin{align} \label{Eq: computation of norm}
 \notag  &\Big\| \pi_{p}^{\varphi}(a)\big(\sum_{l=1}^t a_{1_l}\otimes \hdots \otimes a_{p_l}\otimes \hdots \otimes a_{m_l}\otimes 
\begin{bmatrix} 
f_{l1}\\
\vdots\\
f_{ln}
\end{bmatrix}
 +\mathcal{N}_{\varphi}\big)\Big\|^2 \\
 & =\left\{ \begin{array}{cc}
     \sum\limits_{l,r,=1}^{t} \Big\langle  \varphi(D_{m_r}^{\ast}, \hdots, D_{p_r}^{\ast}\hdots, R_{1_r}^{\ast}R_{1_l}, \hdots, (a^*a\otimes I_{n})D_{p_l}, \hdots, D_{m_l})f_{l}, \; f_{r} \Big\rangle &\text{if} \; k = 2m-1 \\
     \sum\limits_{l,r,=1}^{t} \Big\langle  \varphi(D_{m_r}^{\ast}, \hdots,D_{p_r}^{\ast}, \hdots ,R_{1_r}^{\ast}, R_{1_l}, \hdots ,(a^*a\otimes I_{n})D_{p_l}, \hdots, D_{m_l})f_{l}, \; f_{r} \Big\rangle&\text{if} \; k = 2m \\
\end{array} \right.
\end{align}
where $f_{l}, D_{p_l}$ and $R_{1_l}$ are defined in Equation 
 \eqref{Eq: Essential}. Now,  we can write Equation \eqref{Eq: computation of norm} as, 
 \begin{align} \label{Eq: order relation on phi}
 \notag  &\Big\| \pi_{p}^{\varphi}(a)\big(\sum_{l=1}^t a_{1_l}\otimes \hdots \otimes a_{p_l}\otimes \hdots \otimes a_{m_l}\otimes 
\begin{bmatrix} 
f_{l1}\\
\vdots\\
f_{ln}
\end{bmatrix}
 +\mathcal{N}_{\varphi}\big)\Big\|^2 \\
  =& \left\{ \begin{array}{cc}
    \Big\langle  \varphi_{t}(D_{m}^{\ast}, \hdots,D_{p}^{\ast},\hdots ,R_{1}^{\ast}R_{1}, \hdots \big((a^*a\otimes I_{n})\otimes I_{t}\big) D_{p}, \hdots, D_{m})f, \; f \Big\rangle &\text{if} \; k = 2m-1 \\
   \Big\langle  \varphi_{t}(D_{m}^{\ast},\hdots,D_{p}^{\ast},\hdots ,R_{1}^{\ast}, R_{1}, \hdots \big((a^*a\otimes I_{n})\otimes I_{t}\big) D_{p}, \hdots, D_{m})f, \; f \Big\rangle  &\text{if} \; k = 2m \\
\end{array} \right.
 \end{align}

where $D_{l}, R_{1}$ are defined in Equation \eqref{Eq: Dp, R1}.
As $\varphi$ is CP map and invariant, we conclude that
\begin{align*}
&\left\{ \begin{array}{cc}
    \Big\langle  \varphi_{t}(D_{m}^{\ast}, \hdots,D_{p}^{\ast},\hdots ,R_{1}^{\ast}R_{1}, \hdots \big( (a^*a\otimes I_{n})\otimes I_{t}\big)D_{p}, \hdots, D_{m})f, \; f \Big\rangle &\text{if} \; k = 2m-1 \\ 
   \Big\langle  \varphi_{t}(D_{m}^{\ast},\hdots,D_{p}^{\ast},\hdots ,R_{1}^{\ast}, R_{1}, \hdots \big((a^*a\otimes I_{n})\otimes I_{t}\big)D_{p}, \hdots, D_{m})f, \; f \Big\rangle  &\text{if} \; k = 2m \\
\end{array} \right. \\
\leq &\Vert a\Vert^2  \left\{ \begin{array}{cc}
    \Big\langle  \varphi_{t}(D_{m}^{\ast}, \hdots,D_{p}^{\ast},\hdots ,R_{1}^{\ast}R_{1}, D_{p}, \hdots, D_{m})f, \; f \Big\rangle &\text{if} \; k = 2m-1 \\ 
   \Big\langle  \varphi_{t}(D_{m}^{\ast},\hdots,D_{p}^{\ast},\hdots ,R_{1}^{\ast}, R_{1}, D_{p}, \hdots, D_{m})f, \; f \Big\rangle  &\text{if} \; k = 2m .\\
\end{array} \right. 
\end{align*}
Putting $a=1$ in Equation \eqref{Eq: order relation on phi}, we obtain that
\begin{align*}
  &\left\{ \begin{array}{cc}
    \Big\langle  \varphi_{t}(D_{m}^{\ast}, \hdots,D_{p}^{\ast},\hdots ,R_{1}^{\ast}R_{1}, D_{p}, \hdots, D_{m})f, \; f \Big\rangle &\text{if} \; k = 2m-1 \\ 
   \Big\langle  \varphi_{t}(D_{m}^{\ast},\hdots,D_{p}^{\ast},\hdots ,R_{1}^{\ast}, R_{1}, D_{p}, \hdots, D_{m})f, \; f \Big\rangle  &\text{if} \; k = 2m \\
\end{array} \right. \\
=& \Big \Vert \Big(\sum_{l=1}^t a_{1_{l}}\otimes\hdots\otimes a_{p_{l}}\otimes  \hdots\otimes a_{m_{l}}\otimes 
  \begin{bmatrix}f_{l1}\\
  \vdots\\
  f_{ln}
  \end{bmatrix}
  +\mathcal{N}_{\varphi} \Big)\Big\Vert^2.
\end{align*}
Combining all the inequalities, we get 
\begin{align*}
&\Big\| \pi_{p}^{\varphi}(a)\big(\sum_{l=1}^t a_{1_l}\otimes \hdots \otimes a_{p_l}\otimes \hdots \otimes a_{m_l}\otimes 
\begin{bmatrix} 
f_{l1}\\
\vdots\\
f_{ln}
\end{bmatrix}
 +\mathcal{N}_{\varphi}\big)\Big\| \\
 \leq&
 \Vert a \Vert\Big  \Vert \sum_{l=1}^t a_{1_l}\otimes \hdots \otimes a_{p_l}\otimes \hdots \otimes a_{m_l}\otimes 
\begin{bmatrix} 
f_{l1}\\
\vdots\\
f_{ln}
\end{bmatrix}
 +\mathcal{N}_{\varphi} \Big \Vert . 
\end{align*}
Now $\pi_{p}^{\varphi}(a)$ can be extended uniquely to $\mathcal{K}_{\varphi}$ with preserving its norm as $(\mathcal{A}^{\otimes m}\otimes \mathcal{H}^n )/{\mathcal{N}_{\varphi}}$ is dense in $\mathcal{K}_{\varphi}.$  
Thus $\Vert \pi^{\varphi}_{p}(a)\Vert \leq \Vert a \Vert$ for all $a\in \A.$  It is immediate to see $\pi^{\varphi}_{p}(ab)= \pi^{\varphi}_{p}(a)\pi^{\varphi}_{p}(b)$ for all $a,b\in \A$ and $\pi^{\varphi}_{p}\pi^{\varphi}_{q}=\pi^{\varphi}_{q}\pi^{\varphi}_{p}$ for all
 $1\leq p\neq q \leq m.$ Next, we claim that
$\pi^{\varphi}_{p}$ is a $*$-homomorphism. To see this, let us observe the following:
   \begin{align*}
&\Big \langle \pi_{p}^{\varphi}(a) \sum_{l=1}^t a_{1_l}\otimes \hdots \otimes a_{p_l}\otimes \hdots \otimes a_{m_l}\otimes 
\begin{bmatrix} 
f_{l1}\\
\vdots\\
f_{ln}
\end{bmatrix}
 +\mathcal{N}_{\varphi}, \sum_{r=1}^t b_{1_r}\otimes \hdots \otimes b_{p_r}\otimes \hdots \otimes b_{m_r}\otimes 
\begin{bmatrix} 
g_{r1}\\
\vdots\\
g_{rn}
\end{bmatrix}
 +\mathcal{N}_{\varphi}  \Big\rangle\\
 =&\left\{ \begin{array}{cc}
    \sum\limits_{l,r=1}^t\sum\limits_{i,j=1}^n  \big \langle \varphi_{ij}(b_{m_r}^*,\hdots ,b_{p_r}^*, \hdots,b_{1_r}^*a_{1_l}, \hdots ,aa_{p_l}, \hdots , a_{m_l})f_{lj} ,g_{ri} \big \rangle  &\text{if} \; k = 2m-1 \\
   &\\
    \sum\limits_{l,r=1}^t\sum\limits_{i,j=1}^n  \big \langle \varphi_{ij}(b_{m_r}^*,\hdots , b_{p_r}^*,\hdots , b_{1_r}^*, a_{1_l}, \hdots, aa_{p_l}, \hdots , a_{m_l})f_{lj} ,g_{ri} \big \rangle&\text{if} \; k = 2m. \\
\end{array} \right. \\
\end{align*}
Using Proposition \ref{invariant}, we get  
\begin{align*}
&\left\{ \begin{array}{cc}
    \sum\limits_{l,r=1}^t\sum\limits_{i,j=1}^n  \big \langle \varphi_{ij}(b_{m_r}^*,\hdots ,b_{p_r}^*, \hdots,b_{1_r}^*a_{1_l}, \hdots ,aa_{p_l}, \hdots , a_{m_l})f_{lj} ,g_{ri} \big \rangle  &\text{if} \; k = 2m-1 \\
   &\\
    \sum\limits_{l,r=1}^t\sum\limits_{i,j=1}^n  \big \langle \varphi_{ij}(b_{m_r}^*,\hdots , b_{p_r}^*,\hdots , b_{1_r}^*, a_{1_l}, \hdots, aa_{p_l}, \hdots , a_{m_l})f_{lj} ,g_{ri} \big \rangle&\text{if} \; k = 2m \\
\end{array} \right. \\
 =&\left\{ \begin{array}{cc}
    \sum\limits_{l,r=1}^t\sum\limits_{i,j=1}^n  \big \langle \varphi_{ij}(b_{m_r}^*,\hdots ,(a^*b_{p_r})^*, \hdots,b_{1_r}^*a_{1_l}, \hdots ,a_{p_l}, \hdots , a_{m_l})f_{lj} ,g_{ri} \big \rangle  &\text{if} \; k = 2m-1 \\
   &\\
    \sum\limits_{l,r=1}^t\sum\limits_{i,j=1}^n  \big \langle \varphi_{ij}(b_{m_r}^*,\hdots , (a^*b_{p_r})^*,\hdots , b_{1_r}^*, a_{1_l}, \hdots, a_{p_l}, \hdots , a_{m_l})f_{lj} ,g_{ri} \big \rangle&\text{if} \; k = 2m\\
\end{array} \right. \\
=&\Big \langle  \sum\limits_{l=1}^t a_{1_l}\otimes \hdots \otimes a_{p_l}\otimes \hdots \otimes a_{m_l}\otimes 
\begin{bmatrix} 
f_{l1}\\
\vdots\\
f_{ln}
\end{bmatrix}
 +\mathcal{N}_{\varphi}, \pi_{p}^{\varphi}(a^*)\sum_{r=1}^t b_{1_r}\otimes \hdots \otimes b_{p_r}\otimes \hdots \otimes b_{m_r}\otimes 
\begin{bmatrix} 
g_{r1}\\
\vdots\\
g_{rn}
\end{bmatrix}
 +\mathcal{N}_{\varphi}  \Big\rangle.
 \end{align*}
 Now, applying the fact that $(\mathcal{A}^{\otimes m}\otimes \mathcal{H}^n )/{\mathcal{N}_{\varphi}}$ is dense in $\mathcal{K}_{\varphi}$ on the above equation, we obtain
 $\pi_{p}^{\varphi}(a^*)= \pi_{p}^{\varphi}(a)^*$ for all $a\in \A.$
 We define $ V_{{j}}: \mathcal{H}\to \mathcal{K}$ by
$
V_{{j}}(f)= 1\otimes\hdots\otimes1\otimes \begin{bmatrix}
0\\ \vdots\\ f_{j}\\\vdots \\0
\end{bmatrix} + \mathcal{N}_{\varphi}
\text{~for all } f\in \mathcal{H}. 
$
Then, $
\Vert V_{{j}}(f)\Vert^2 
=\langle \varphi_{jj}(1,\hdots ,1)f,f\rangle \leq \Vert \varphi_{jj}(1,\hdots ,1)\Vert \Vert f\Vert^2.
$
Therefore, $\Vert V_{j} \Vert \leq \Vert \varphi_{jj}(1,\hdots,1) \Vert^{\frac{1}{2}}.$ Moreover, it is immediate to see from the last computations that $V_{{jj}}$ is an isometry, whenever  $\varphi_{jj}(1,\hdots,1)=1$.  
 Finally, let $a\in \A,$  then for all $f,g\in \mathcal{H},$ we notice that 
\begin{align*}
&=\left\{ \begin{array}{cc}
   \Big\langle V_{i}^*\pi^{\varphi}_{1}(a_{m})\pi^{\varphi}_{2}(a_{m-1}a_{m+1}) \cdots \pi^{\varphi}_{m}(a_{1}a_{2m-1})V_{{j}}f,g \Big\rangle &\text{if} \; k = 2m-1 \\
   &\\
\Big    \langle V_{i}^{\ast}\pi^{\varphi}_{1}(a_{m}a_{m+1})\pi^{\varphi}_{2}(a_{m-1}a_{m+2}) \cdots \pi^{\varphi}_{m}(a_{1}a_{2m})V_{j}f, g \Big \rangle &\text{if} \; k = 2m \\
\end{array} \right.\\
 &=\left\{ \begin{array}{cc}
  \left \langle a_{m}\otimes a_{m-1}a_{m+1}\otimes\hdots\otimes a_{1}a_{2m-1}\otimes 
\begin{bmatrix}
0 \\ \vdots\\ f_{j}\\\vdots \\0 
\end{bmatrix}+\mathcal{N}_{\varphi}, 1\otimes\hdots\otimes1\otimes 
\begin{bmatrix}
0 \\ \vdots\\ g_{i}\\\vdots \\0 
\end{bmatrix}+\mathcal{N}_{\varphi}\right\rangle  &\text{if} \; k = 2m-1 \\
   &\\
 \left \langle a_{m}a_{m+1}\otimes a_{m-1}a_{m+2}\otimes\hdots\otimes a_{1}a_{2m}\otimes 
\begin{bmatrix}
0 \\ \vdots\\ f_{j}\\\vdots \\0 
\end{bmatrix}+\mathcal{N}_{\varphi}, 1\otimes\hdots\otimes1\otimes 
\begin{bmatrix}
0 \\ \vdots\\ g_{i}\\\vdots \\0 
\end{bmatrix}+\mathcal{N}_{\varphi}\right\rangle &\text{if} \; k = 2m \\
\end{array} \right.\\
&=\left\{ \begin{array}{cc}
     \big \langle \varphi_{ij}(1,\hdots ,1, \hdots,a_{m},a_{{m-1}}a_{{m+1}} \hdots , \hdots , a_{1}a_{2m-1} ) f , g \big \rangle  &\text{if} \; k = 2m-1 \\
   &\\
     \big \langle \varphi_{ij}(1,\hdots , 1,\hdots , a_{m}a_{m+1}, a_{m-1}a_{m+2}, \hdots,  a_{1}a_{2m})f , g \big \rangle &\text{if} \; k = 2m. \\
\end{array} \right. 
\end{align*}
An appeal to Proposition \ref{invariant}, we conclude that
\begin{align*}
&\left\{ \begin{array}{cc}
     \big \langle \varphi_{ij}(1,\hdots ,1, \hdots,a_{m},a_{{m-1}}a_{{m+1}} \hdots , \hdots , a_{1}a_{2m-1} ) f , g \big \rangle  &\text{if} \; k = 2m-1 \\
   &\\
     \big \langle \varphi_{ij}(1,\hdots , 1,\hdots , a_{m}a_{m+1}, a_{m-1}a_{m+2}, \hdots,  a_{1}a_{2m})f , g \big \rangle &\text{if} \; k = 2m \\
\end{array} \right. \\
=&\left\{ \begin{array}{cc}
     \big \langle \varphi_{ij}(a_{1}, \hdots, a_{m-1}, a_{m}, a_{m+1}, \hdots, a_{2m-1}) f ,g \big \rangle  &\text{if} \; k = 2m-1 \\
   &\\
     \big \langle \varphi_{ij}(a_{1}, \hdots, a_{m-1}, a_{m}, a_{m+1}, \hdots, a_{2m})f ,g \big \rangle &\text{if} \; k = 2m. \\
\end{array} \right. 
\end{align*}
This completes the proof.
\end{proof}

\subsection{Minimality condition}
 
 The concept of minimality has received significant attention in research and has proven to be highly applicable across various domains (see  \cite{W. Arveson69,BGS21,Heo10, Joita}). Hence, it is both intuitive and imperative to explore the minimality condition in the context of invariant multilinear block CP maps. In this regard, we introduce the minimality condition as a fundamental aspect of our investigation.
 
\begin{Def}[Minimality condition] Let $\varphi = [\varphi_{ij}]\in ICP\left (M_{n}(\mathcal{A})^{k},  M_{n}(\mathcal{B(H)})\right).$ Then a triple $\left (\{\pi_{i}\}_{i=1}^m, \{V_{i}\}_{i=1}^{n}, \mathcal{K}\right)$ is called minimal Stinespring tripple of $\varphi$ if 
\begin{align}\label{minimality condition}
\mathcal{K}= \overline{{\rm span}}\left\{\pi_{1}(\mathcal{A})\cdots \pi_{m}(\mathcal{A}) \big(\sum_{i=1}^{n} V_{i}\mathcal{H}\big )\right\}.
\end{align}
\end{Def}
Now, we turn our attention to the uniqueness of the Stinespring representation for block multilinear invariant CP maps.
\begin{thm}[Uniqueness property]\label{Theorem: Minimal BlockStinespring} Let $\left(\{\pi^{(p)}_{i}\}_{i=1}^m, \{V^{(p)}_{i}\}_{i=1}^{n}, \mathcal{K}^{(p)}\right), p=1,2$ be two minimal Stinespring triples of $\varphi= [\varphi_{ij}]\in ICP\left(M_{n}(\mathcal{A})^{k}, M_{n}(\mathcal{B(H)})\right).$ Then there exists a unitary operator $U: \mathcal{K}^{(1)}\to \mathcal{K}^{(2)}$ such that 
\begin{enumerate}
\item $\pi_{1}^{(2)}(a_{1}) \cdots \pi_{m}^{(2)}(a_{m})U= U\pi_{1}^{(1)}(a_{1}) \cdots \pi_{m}^{(1)}(a_{m})$ for all $a_{1}, \hdots, a_{m}\in \mathcal{A}.$
\item $UV^{(1)}_{i}=V_{i}^{(2)}~$ for all $1\leq i\leq m.$
\end{enumerate}
\end{thm}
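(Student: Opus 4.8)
The plan is to imitate the classical uniqueness argument for Stinespring dilations, transported to the present multilinear, block-indexed setting. I would define a candidate map $U_0$ on a dense subset of $\mathcal{K}^{(1)}$ by
\[
U_0\Big(\pi^{(1)}_{1}(a_{1})\cdots \pi^{(1)}_{m}(a_{m})\,\textstyle\sum_{i=1}^{m}V^{(1)}_{i}h_i\Big) := \pi^{(2)}_{1}(a_{1})\cdots \pi^{(2)}_{m}(a_{m})\,\textstyle\sum_{i=1}^{m}V^{(2)}_{i}h_i,
\]
for $a_1,\dots,a_m\in\mathcal A$ and $h_i\in\mathcal H$, and then extend by linearity to finite sums of such vectors. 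By the minimality condition \eqref{minimality condition}, such vectors span a dense subspace of $\mathcal{K}^{(1)}$ (and their images span a dense subspace of $\mathcal{K}^{(2)}$). First I would verify that $U_0$ is well-defined and isometric: the key point is that for $p=1,2$ the inner product
\[
\Big\langle \pi^{(p)}_{1}(a_{1})\cdots \pi^{(p)}_{m}(a_{m})V^{(p)}_{j}h_j,\ \pi^{(p)}_{1}(b_{1})\cdots \pi^{(p)}_{m}(b_{m})V^{(p)}_{i}g_i\Big\rangle
\]
can be rewritten, using that each $\pi^{(p)}_q$ is a commuting family of $*$-homomorphisms and the defining formula of Theorem~\ref{Theorem: BlockStinespring}, as an expression of the form $\langle \varphi_{ij}(\ \cdots\ )h_j,\,g_i\rangle$ with an argument built only from products $b_q^{*}a_q$ and the fixed entries — in particular, an expression that does not depend on $p$. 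Summing over $i,j$ and over the finitely many terms then shows $\langle U_0 x, U_0 y\rangle = \langle x,y\rangle$ for all $x,y$ in the dense subspace; in particular $U_0$ is well-defined (its kernel contains the null vectors) and extends to an isometry $U:\mathcal K^{(1)}\to\mathcal K^{(2)}$, which is surjective since its range is dense, hence unitary.

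Granting $U$ unitary, property (1) is immediate from the definition of $U_0$: applying $\pi^{(1)}_{1}(c_{1})\cdots\pi^{(1)}_{m}(c_{m})$ to a spanning vector and using the homomorphism property $\pi^{(1)}_q(c_q)\pi^{(1)}_q(a_q)=\pi^{(1)}_q(c_q a_q)$ (together with the commutation relations to regroup factors by index), one checks that $U\pi^{(1)}_{1}(c_{1})\cdots\pi^{(1)}_{m}(c_{m})$ and $\pi^{(2)}_{1}(c_{1})\cdots\pi^{(2)}_{m}(c_{m})U$ agree on the dense spanning set, hence everywhere. Property (2), $UV^{(1)}_i=V^{(2)}_i$, follows by taking $a_1=\dots=a_m=1$ in the definition of $U_0$ applied to a single summand $V^{(1)}_i h$, since each $\pi^{(p)}_q$ is unital so $\pi^{(p)}_1(1)\cdots\pi^{(p)}_m(1)=I$.

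The main obstacle I anticipate is the well-definedness/isometry computation: one must carefully track how the invariance identities (Definition of invariant block maps, and its entrywise consequence Proposition~\ref{invariant}) are used to move the scalars/elements $c_q$ past the product of $*$-homomorphisms and to collapse the inner products of dilation vectors back into the $\varphi_{ij}$'s, exactly as in the proof of Theorem~\ref{Theorem: BlockStinespring}. Once that bookkeeping is set up for the two cases $k=2m-1$ and $k=2m$ separately (the odd case carrying the distinguished middle slot $a_m$, the even case pairing $a_m a_{m+1}$), the rest is routine density-and-continuity reasoning. I would present the $k=2m-1$ case in detail and remark that $k=2m$ is entirely analogous.
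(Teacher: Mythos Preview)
Your proposal is correct and follows essentially the same approach as the paper: define $U$ on the dense spanning set dictated by minimality, reduce the inner-product computation to an expression in the $\varphi_{ij}$'s (independent of $p$) via the Stinespring formula and the commuting $*$-homomorphism property, extend by density to a unitary, and read off (1) and (2) from the definition. The only difference is cosmetic---the paper computes norms rather than inner products and writes the resulting $\varphi_{ij}$-expression in the full $(2m{-}1)$- or $2m$-slot form rather than the reduced form with $1$'s---and it leaves the verification of (1) and (2) to ``easy computations,'' whereas you spell them out.
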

\begin{proof} We  define a linear map $$U:  {\rm span}\left\{\pi^{(1)}_{1}(\mathcal{A})\cdots \pi^{(1)}_{m}(\mathcal{A}) \big(\sum_{j=1}^{n} V^{(1)}_{j}\mathcal{H}\big )\right\} \to  {\rm span}\left\{\pi^{(2)}_{1}(\mathcal{A})\cdots \pi^{(2)}_{m}(\mathcal{A}) 
\big(\sum_{j=1}^{n} V^{(2)}_{j}\mathcal{H}\big )\right\}$$ 
by
$$
U\left ( \Big( \sum_{l=1}^{t} \pi^{(1)}_{1}(a_{l_{1}})\cdots \pi^{(1)}_{m}(a_{l_{m}})\Big) (\sum_{j=1}^{n} V^{(1)}_{j} h_{lj})\right) = \Big(\sum_{l=1}^{t} \pi^{(2)}_{1}(a_{l_{1}})\cdots \pi^{(2)}_{m}(a_{l_{m}})\Big) (\sum_{j=1}^{n} V^{(2)}_{j} h_{lj})
$$
for all $\Big( \sum\limits_{l=1}^{t} \pi^{(1)}_{1}(a_{l_{1}})\cdots \pi^{(1)}_{m}(a_{l_{m}})\Big) (\sum\limits_{j=1}^{n} V^{(1)}_{j} h_{lj}) \in {\rm span}\left\{\pi^{(1)}_{1}(\mathcal{A})\cdots \pi^{(1)}_{m}(\mathcal{A}) \big(\sum\limits_{j=1}^{n} V^{(1)}_{j}\mathcal{H}\big )\right\}.$
Now, we claim that U is an isometry. It is straight forward  to see that
\begin{align*}
&\left \Vert \Big(\sum_{l=1}^{t} \pi^{(2)}_{1}(a_{l_{1}})\cdots \pi^{(2)}_{m}(a_{l_{m}})\Big) (\sum_{j=1}^{n} V^{(2)}_{j} h_{lj}) \right\Vert^2 \\
 =&\left\{ \begin{array}{cc}
    \sum\limits_{l,r=1}^t\sum\limits_{i,j=1}^n  \big \langle \varphi_{ij}(a_{r_m}^*,a_{r_{m-1}}^*, \hdots,a_{r_1}^*a_{l_1}, \hdots , a_{l_{m-1}}, a_{l_m}) h_{lj} ,h_{ri} \big \rangle  &\text{if} \; k = 2m-1 \\
   &\\
    \sum\limits_{l,r=1}^t\sum\limits_{i,j=1}^n  \big \langle \varphi_{ij}(a_{r_m}^*,\hdots , a_{r_{m-1}}^*,\hdots , a_{r_1}^*, a_{l_1}, \hdots, a_{l_{m-1}} , a_{l_m}) h_{lj} , h_{ri} \big \rangle&\text{if} \; k = 2m. \\
\end{array} \right. 
\end{align*}
Since $\left(\{\pi^{(1)}_{j}\}_{j=1}^m, \{V^{(1)}_{j}\}_{j=1}^{n}, \mathcal{K}^{(1)}\right)$ is the minimal Stinespring triple of $\varphi,$ following the above procedure, we get
\begin{align*}
&\left \Vert \Big(\sum_{l=1}^{t} \pi^{(1)}_{1}(a_{l_{1}})\cdots \pi^{(1)}_{m}(a_{l_{m}})\Big) (\sum_{j=1}^{n} V^{(1)}_{j} h_{lj}) \right\Vert^2\\
 =&\left\{ \begin{array}{cc}
    \sum \limits_{l,r=1}^t\sum\limits_{i,j=1}^m  \big \langle \varphi_{ij}(a_{r_m}^*,a_{r_{m-1}}^*, \hdots,a_{r_1}^*a_{l_1}, \hdots , a_{l_{m-1}}, a_{l_m}) h_{lj} ,h_{ri} \big \rangle  &\text{if} \; k = 2m-1 \\
   &\\
    \sum\limits_{l,r=1}^t\sum\limits_{i,j=1}^m  \big \langle \varphi_{ij}(a_{r_m}^*,\hdots , a_{r_{m-1}}^*,\hdots , a_{r_1}^*, a_{l_1}, \hdots, a_{l_{m-1}}, a_{l_m}) h_{lj} ,h_{ri} \big \rangle&\text{if} \; k = 2m. \\
\end{array} \right. 
\end{align*}
Combining the last two equations, we conclude that 
$$\left \Vert U \Big(\sum_{l=1}^{t} \pi^{(1)}_{1}(a_{l_{1}})\cdots \pi^{(1)}_{m}(a_{l_{m}})\Big) (\sum_{j=1}^{n} V^{(1)}_{j} h_{lj}) \right\Vert =\left \Vert \Big(\sum_{l=1}^{t} \pi^{(1)}_{1}(a_{l_{1}})\cdots \pi^{(1)}_{m}(a_{l_{m}})\Big) (\sum_{j=1}^{n} V^{(1)}_{j} h_{lj}) \right\Vert.$$
Thus the map $U$ is  well defined and isometry. Therefore, $U$ can be extended uniquely from $\mathcal{K}^{(1)}$ to $\mathcal{K}^{(2)}$ preserving the isometry property. Without loss of generality, we denote this map by $U$. Clearly, the range of $U$ contains  ${\rm span}\left\{\pi^{(2)}_{1}(\mathcal{A})\cdots \pi^{(2)}_{m}(\mathcal{A}) 
\big(\sum\limits_{j=1}^{n} V^{(2)}_{j}\mathcal{H}\big )\right\}$ and it is dense in $\mathcal{K}^{(2)}.$ Therefore $U: \mathcal{K}^{(1)}\to \mathcal{K}^{(2)}$ is a unitary. Some easy computations yield that  $U$ satisfies the property (1) and (2).

\end{proof}
The following corollary is a multilinear version of Stinespring dilation theorem for block states.
\begin{cor}\label{Kaplan+ Multilinear} Let $\varphi= [\varphi_{ij}]\in ICP\big(M_{n}(\mathcal{A})^{k}, M_{n}(\mathbb{C})\big)$ be an invariant multilinear block CP map. Then there are $m$-commuting family of $*$-homomorphisms 
${\pi^{\varphi}_{i}:\A\to \mathcal{B(K)}},$  $1\leq i\leq m,$ and $\{f_{j}\}_{j=1}^{n} \subseteq \mathcal{K}$, 
for some Hilbert space $\mathcal{K}$ such that
\begin{enumerate}
    \item[(i)] if $k = 2m-1$, then $\varphi_{ij}$ is of the form:
    \begin{equation*}
        \varphi_{ij}(a_{1}, \hdots, a_{m-1}, a_{m}, a_{m+1}, \hdots, a_{2m-1}) = \langle \pi^{\varphi}_{1}(a_{m})\pi^{\varphi}_{2}(a_{m-1}a_{m+1}) \cdots \pi^{\varphi}_{m}(a_{1}a_{2m-1}) f_{j}, f_{i}\rangle,
    \end{equation*}
    for all $a_{1}, a_{2}, \hdots, a_{2m-1} \in \mathcal{A}$.
    \item[(ii)] if $k = 2m$, then $\varphi_{ij}$ is of the form:
    \begin{equation*}
        \varphi_{ij}(a_{1}, \hdots, a_{m-1}, a_{m}, a_{m+1}, \hdots, a_{2m}) = \langle\pi^{\varphi}_{1}(a_{m}a_{m+1})\pi^{\varphi}_{2}(a_{m-1}a_{m+2}) \cdots \pi^{\varphi}_{m}(a_{1}a_{2m})f_{j}, f_{i}\rangle
    \end{equation*}
    for all $a_{1}, a_{2}, \hdots, a_{2m} \in \mathcal{A}$.
   
\end{enumerate}
\end{cor}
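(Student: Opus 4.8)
The plan is to deduce this statement directly from Theorem~\ref{Theorem: BlockStinespring} by specializing the Hilbert space on which the target algebra acts. First I would note that $\mathbb{C} = \mathcal{B}(\mathbb{C})$, so that $M_n(\mathbb{C}) = M_n(\mathcal{B}(\mathbb{C}))$; hence $\varphi = [\varphi_{ij}]$ may be regarded as an element of $ICP\big(M_n(\mathcal{A})^k, M_n(\mathcal{B}(\mathcal{H}))\big)$ with $\mathcal{H} = \mathbb{C}$. Applying Theorem~\ref{Theorem: BlockStinespring} in this situation yields a Hilbert space $\mathcal{K}$, an $m$-commuting family of unital $*$-homomorphisms $\pi^{\varphi}_{i} : \mathcal{A} \to \mathcal{B}(\mathcal{K})$, $1 \leq i \leq m$, and operators $V_i \in \mathcal{B}(\mathbb{C}, \mathcal{K})$, $1 \leq i \leq n$, expressing each $\varphi_{ij}$ in the stated product form.

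Next I would identify each $V_j \in \mathcal{B}(\mathbb{C}, \mathcal{K})$ with the vector $f_j := V_j(1) \in \mathcal{K}$; then the adjoint $V_i^{\ast} : \mathcal{K} \to \mathbb{C}$ is precisely the functional $\xi \mapsto \langle \xi, f_i \rangle_{\mathcal{K}}$. Substituting $\mathcal{H} = \mathbb{C}$ into Theorem~\ref{Theorem: BlockStinespring}(i), for $k = 2m-1$ one computes
\[
\varphi_{ij}(a_1, \cdots, a_{2m-1})\cdot 1 = V_i^{\ast}\big(\pi^{\varphi}_{1}(a_m)\pi^{\varphi}_{2}(a_{m-1}a_{m+1}) \cdots \pi^{\varphi}_{m}(a_1 a_{2m-1}) f_j\big) = \langle \pi^{\varphi}_{1}(a_m) \cdots \pi^{\varphi}_{m}(a_1 a_{2m-1}) f_j,\, f_i \rangle,
\]
which is exactly the claimed formula; the case $k = 2m$ is identical, starting from Theorem~\ref{Theorem: BlockStinespring}(ii). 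This produces the family $\{f_j\}_{j=1}^{n} \subseteq \mathcal{K}$ and finishes the argument.

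Since the result is a direct specialization, I do not expect a genuine obstacle. The only points requiring care are the index bookkeeping --- that the left factor $V_i^{\ast}$ produces the inner product against $f_i$ while the right factor $V_j$ contributes $f_j$ --- and the remark that the $m$-commutativity of the $\pi^{\varphi}_i$ together with their $*$-homomorphism property are inherited verbatim from Theorem~\ref{Theorem: BlockStinespring}. Alternatively, one could rerun the semi-inner-product construction of Lemma~\ref{Lamma-Block multilienar Stinespring} with $\mathcal{H} = \mathbb{C}$ from the start; but invoking Theorem~\ref{Theorem: BlockStinespring} is shorter and avoids repeating the GNS-type machinery.
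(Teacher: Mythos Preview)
Your proposal is correct and is exactly the intended argument: the paper states the corollary without proof, treating it as the immediate specialization of Theorem~\ref{Theorem: BlockStinespring} to $\mathcal{H}=\mathbb{C}$, where the bounded operators $V_j\in\mathcal{B}(\mathbb{C},\mathcal{K})$ are identified with vectors $f_j\in\mathcal{K}$. Your index bookkeeping and the observation that $V_i^{\ast}\xi=\langle \xi,f_i\rangle$ are precisely what is needed.
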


\section{Radon Nikod\'ym Theorem for invariant block multilinear CP maps}\label{Rdaon}

In the theory of CP maps, the Radon Nikod\'ym derivative plays a crucial role in describing the relationship between two CP maps when one dominates the other \cite{W. Arveson69}.
Moreover, Radon-Nikod\'ym derivative provides a tool for characterizing the relative behavior of two quantum channels. It allows us to compare the effects of the channels on quantum states and analyze the extent to which one channel can be approximated by another. 
We explore the Radon Nikod\'ym Theorem for block multilinear CP maps, which is an application of the minimal Stinespring representation (refer to Theorem \ref{Theorem: Minimal BlockStinespring}).

To begin our discussion, it is necessary to establish an order relation ($\leq$) between two block multilinear CP maps.

 \begin{Def}   Let $\varphi= [\varphi_{ij}], \psi=[\psi_{ij}]\in ICP\big(M_{n}(\mathcal{A})^{k}, M_{n}(\mathbb{C})\big).$ 
 Then $\varphi$ dominates $\psi$ (in short $\psi \leq \varphi$) if 
  $$\varphi -\psi \in  ICP\big(M_{n}(\mathcal{A})^{k}, M_{n}(\mathbb{C})\big).$$
  \end{Def}
Let us assume that
 $\left(\{\pi^{(\varphi)}_{i}\}_{i=1}^m, \{V^{(\varphi)}_{i}\}_{i=1}^{n}, \mathcal{K}^{(\varphi)}\right)$ and 
 $\left(\{\pi^{(\psi)}_{i}\}_{i=1}^m, \{V^{(\psi)}_{i}\}_{i=1}^{n}, \mathcal{K}^{(\psi)}\right)$ be two minimal Stinespring triples of $\varphi$ and $\psi$ respectively. The commutant of the family 
 $\{\pi_{i}^{(\varphi)}\}_{i=1}^{m}$ is defined by the subspace of $\mathcal{B}(\mathcal{K}^{(\varphi)})$ by
 $$
\bigcap\limits_{i=1}^{m}\pi^{(\varphi)}_{i}(\mathcal{A})':=
 \{ T\in \mathcal{B}(\mathcal{K}^{(\varphi)}):  \pi_{i}^{(\varphi)}(a)T=
 T\pi_{i}^{(\varphi)}(a), \text{ for all } a\in \mathcal{A},  1\leq i \leq m\}.
  $$

\begin{thm}\label{thm: Radon Nikodym derivative} Let $\varphi= [\varphi_{ij}], \psi=[\psi_{ij}]\in ICP\big(M_{n}(\mathcal{A})^{k}, M_{n}(\mathbb{C})\big).$
Then $[\psi_{ij}]\leq  [\varphi_{ij}]$ if and only if there exists a $0\leq T \leq I$   with 
$$ T\in \bigcap\limits_{i=1}^{m}\pi_{i}(\mathcal{A})'$$
 such that each $\psi_{ij}$ can be expressed as follow:
   
  \begin{enumerate}
    \item[(i)] if $k = 2m-1$, then $\psi_{ij}$ is of the form:
 $$ \psi_{ij}(a_{1}, \hdots, a_{m-1}, a_{m}, a_{m+1}, \hdots, a_{2m-1}) = V_{i}^{(\varphi)\ast}T\pi^{(\varphi)}_{1}(a_{m})\pi^{(\varphi)}_{2}(a_{m-1}a_{m+1}) \cdots \pi^{(\varphi)}_{m}(a_{1}a_{2m-1})V^{(\varphi)}_{j},$$
 for all $a_{1}, a_{2}, \hdots, a_{2m-1} \in \mathcal{A}$.
    \item[(ii)] if $k = 2m$, then $\psi_{ij}$ is of the form:
    \begin{equation*}
        \psi_{ij}(a_{1}, \hdots, a_{m-1}, a_{m}, a_{m+1}, \hdots, a_{2m}) = V_{i}^{(\varphi)\ast}T\pi^{(\varphi)}_{1}(a_{m}a_{m+1})\pi^{(\varphi)}_{2}(a_{m-1}a_{m+2}) \cdots \pi^{(\varphi)}_{m}(a_{1}a_{2m})V^{(\varphi)}_{j},
    \end{equation*}
    for all $a_{1}, a_{2}, \hdots, a_{2m} \in \mathcal{A}$.
\end{enumerate}  
\end{thm}
\begin{proof}
It enough to prove only if part. Recall that $ [\psi_{ij}] \leq [\varphi_{ij}].$ Since 
$\left (\{\pi^{(\varphi)}_{i}\}_{i=1}^{m}, \{V^{(\varphi)}_{i}\}_{i=1}^{n}, \mathcal{K}^{(\varphi)}\right),$ and $\left(\{\pi^{(\psi)}_{i}\}_{i=1}^{m}, \{V^{(\psi)}_{i}\}_{i=1}^{n}, \mathcal{K}^{(\psi)}\right)$ are the minimal dilations of $\varphi$ and $\psi,$ therefore
\begin{align*}
    \mathcal{K}^{(\varphi)}&= \overline{{\rm span}}\left\{\pi^{(\varphi)}_{1}(\mathcal{A})\cdots \pi^{(\varphi)}_{m}(\mathcal{A}) \big(\sum_{i=1}^{n} V^{(\varphi)}_{i}\mathcal{H}\big )\right\},&
    \mathcal{K}^{(\psi)}= \overline{{\rm span}}\left\{\pi^{(\psi)}_{1}(\mathcal{A})\cdots \pi^{(\psi)}_{m}(\mathcal{A}) \big(\sum_{i=1}^{n} V^{(\psi)}_{i}\mathcal{H}\big )\right\}.
\end{align*}
In what follows,
we define a map 
$$S: {\rm span}\left\{\pi^{(\varphi)}_{1}(\mathcal{A})\cdots \pi^{(\varphi)}_{m}(\mathcal{A}) \big(\sum_{i=1}^{n} V^{(\varphi)}_{i}\mathcal{H}\big )\right\}
\to  {\rm span}\left\{\pi^{(\psi)}_{1}(\mathcal{A})\cdots \pi^{(\psi)}_{m}(\mathcal{A}) \big(\sum_{i=1}^{n} V^{\psi}_{i}\mathcal{H}\big )\right\}$$ by

$$
S\left ( \Big( \sum_{l=1}^{t} \pi^{(\varphi)}_{1}(a_{l_{1}})\cdots \pi^{(\varphi)}_{m}(a_{l_{m}})\Big) (\sum_{j=1}^{n} V^{(\varphi)}_{j} h_{lj})\right) = \Big(\sum_{l=1}^{t} \pi^{(\psi)}_{1}(a_{l_{1}})\cdots \pi^{(\psi)}_{m}(a_{l_{m}})\Big) (\sum_{j=1}^{n} V^{(\psi)}_{j} h_{lj})
$$
for all $\Big( \sum\limits_{l=1}^{t} \pi^{(\varphi)}_{1}(a_{l_{1}})\cdots \pi^{(\varphi)}_{m}(a_{l_{m}})\Big) (\sum\limits_{j=1}^{n} V^{(\varphi)}_{j} h_{lj}) \in {\rm span}\left\{\pi^{(\varphi)}_{1}(\mathcal{A})\cdots \pi^{(\varphi)}_{m}(\mathcal{A}) \big(\sum\limits_{j=1}^{n} V^{(\varphi)}_{j}\mathcal{H}\big )\right\}.$
To see, $S$ is well defined, observe that
\begin{align*}
  &\left \Vert  \Big(\sum_{l=1}^{t} \pi^{(\psi)}_{1}(a_{l_{1}})\cdots \pi^{(\psi)}_{m}(a_{l_{m}})\Big) (\sum_{j=1}^{n} V^{(\psi)}_{j} h_{lj}) \right\Vert^2 \\
  =&\left\{ \begin{array}{cc}
    \sum \limits_{l,r=1}^t\sum\limits_{i,j=1}^n  \big \langle \psi_{ij}(a_{r_m}^*,a_{r_{m-1}}^*, \hdots,a_{r_1}^*a_{l_1}, \hdots , a_{l_{m-1}}, a_{l_m}) h_{lj} ,h_{ri} \big \rangle  &\text{if} \; k = 2m-1 \\
   &\\
    \sum\limits_{l,r=1}^t\sum\limits_{i,j=1}^n  \big \langle \psi_{ij}(a_{r_m}^*,\hdots , a_{r_{m-1}}^*,\hdots , a_{r_1}^*, a_{l_1}, \hdots, a_{l_{m-1}}, a_{l_m}) h_{lj} ,h_{ri} \big \rangle&\text{if} \; k = 2m \\
\end{array} \right. \\
=&\left\{ \begin{array}{cc}
    \sum \limits_{l,r=1}^t  \big \langle [ \psi_{ij}(a_{r_m}^*,a_{r_{m-1}}^*, \hdots,a_{r_1}^*a_{l_1}, \hdots , a_{l_{m-1}}, a_{l_m})] 
    \begin{bmatrix}
    h_{l1}\\\vdots\\ h_{ln}
    \end{bmatrix}, \begin{bmatrix}
    h_{r1}\\\vdots\\ h_{rn}
    \end{bmatrix}
    \big \rangle  &\text{if} \; k = 2m-1 \\
   &\\
    \sum\limits_{l,r=1}^t  \big \langle[ \psi_{ij}(a_{r_m}^*,\hdots , a_{r_{m-1}}^*,\hdots , a_{r_1}^*, a_{l_1}, \hdots, a_{l_{m-1}}, a_{l_m})]\begin{bmatrix}
    h_{l1}\\\vdots\\ h_{ln}
    \end{bmatrix} ,\begin{bmatrix}
    h_{r1}\\\vdots\\ h_{rn}
    \end{bmatrix} \big \rangle&\text{if} \; k = 2m \\
\end{array} \right.\\
=&\left\{ \begin{array}{cc}
    \sum \limits_{l,r=1}^t  \left \langle \psi(D_{r_m}^*,D_{r_{m-1}}^*, \hdots,R_{r_1}^*R_{l_1}, \hdots , D_{l_{m-1}}, D_{l_m}) 
    h_{l},h_{r}
    \right \rangle  &\text{if} \; k = 2m-1 \\
   &\\
    \sum\limits_{l,r=1}^t  \left \langle  \psi (D_{r_m}^*,\hdots , D_{r_{m-1}}^*,\hdots , R_{r_1}^*, R_{l_1}, \hdots, D_{l_{m-1}}, D_{l_m}) h_{l} , h_{r} \right \rangle&\text{if} \; k = 2m, \\
\end{array} \right.
\end{align*}
 where $D_{r_p}, R_{l_{1}}$ are defined in Equation  \eqref{Eq: Essential}.
Similarly,
\begin{align*}
  &\left \Vert  \Big(\sum_{l=1}^{t} \pi^{(\psi)}_{1}(a_{l_{1}})\cdots \pi^{(\psi)}_{m}(a_{l_{m}})\Big) (\sum_{j=1}^{n} V^{(\psi)}_{j} h_{lj}) \right\Vert^2 \\
=&\left\{ \begin{array}{cc}
    \sum \limits_{l,r=1}^t   \left \langle \psi(D_{r_m}^*,D_{r_{m-1}}^*, \hdots,R_{r_1}^*R_{l_1}, \hdots , D_{l_{m-1}}, D_{l_m}) 
    h_{l},h_{r}
    \right \rangle  &\text{if} \; k = 2m-1 \\
   &\\
    \sum\limits_{l,r=1}^t  \left \langle  \psi (D_{r_m}^*,\hdots , D_{r_{m-1}}^*,\hdots , R_{r_1}^*, R_{l_1}, \hdots, D_{l_{m-1}}, D_{l_m}) h_{l} , h_{r} \right\rangle &\text{if} \; k = 2m, \\
\end{array} \right.
\end{align*}
 where $D_{r_p}, R{l_{1}}$ are defined in Equation  \eqref{Eq: Essential}.
 Since $\psi \leq \varphi,$ we have
 
    \begin{align*}\sum \limits_{l,r=1}^t  \big \langle \psi(D_{r_m}^*,D_{r_{m-1}}^*, \hdots,R_{r_1}^*R_{l_1}, \hdots , D_{l_{m-1}}, D_{l_m}) 
    h_{l},h_{r}  \big \rangle \\
    \leq \sum \limits_{l,r=1}^t 
    \big \langle \varphi(D_{r_m}^*,D_{r_{m-1}}^*, \hdots,R_{r_1}^*R_{l_1}, \hdots , D_{l_{m-1}}, D_{l_m}) 
    h_{l},h_{r}  \big \rangle
 \end{align*}
 whenever $k=2m-1,$ and
 
\begin{align*}\sum \limits_{l,r=1}^t \left \langle \psi(D_{r_m}^*,D_{r_{m-1}}^*, \hdots,R_{r_1}^*, R_{l_1}, \hdots , D_{l_{m-1}}, D_{l_m}) 
    h_{l},h_{r}  \right \rangle \\
    \leq \sum \limits_{l,r=1}^t 
    \left \langle \varphi(D_{r_m}^*,D_{r_{m-1}}^*, \hdots,R_{r_1}^*,R_{l_1}, \hdots , D_{l_{m-1}}, D_{l_m}) 
    h_{l},h_{r}  \right \rangle
 \end{align*}
  whenever $k=2m.$ Therefore, we have
$$
\left \Vert  \Big(\sum_{l=1}^{t} \pi^{(\psi)}_{1}(a_{l_{1}})\cdots \pi^{(\psi)}_{m}(a_{l_{m}})\Big) (\sum_{j=1}^{m} V^{(\psi)}_{j} h_{lj})\right\Vert^2 \leq 
\left \Vert  \Big(\sum_{l=1}^{t} \pi^{(\varphi)}_{1}(a_{l_{1}})\cdots \pi^{(\varphi)}_{m}(a_{l_{m}})\Big) (\sum_{j=1}^{m} V^{(\varphi)}_{j} h_{lj})\right\Vert^2.
$$
Consequently, $S: \mathcal{K}^{(\varphi)}\to  \mathcal{K}^{(\psi)}$ is well defined
and $\Vert S \Vert \leq 1$. Set $T= S^*S :\mathcal{K}^{(\varphi)}\to \mathcal{K}^{(\varphi)}$ so that $0\leq T\leq 1.$ It is easy to see
$$SV^{(\varphi)}_{j}= V_{j}^{(\psi)} \text{ and } S\pi^{(\varphi)}_{i}(a)=\pi^{(\psi)}_{i}(a)S$$ for all $a\in \mathcal{A}.$ Indeed,
\begin{align*}
    &S\pi^{(\varphi)}_{i}(a)\Big(\big(\sum_{l=1}^{t} \pi^{(\varphi)}_{1}(a_{l_{1}})\cdots \pi^{(\varphi)}_{m}(a_{l_{m}})\big) (\sum_{j=1}^{n} V^{(\varphi)}_{j} h_{lj})\Big)\\
    &=  S\Big(\pi^{(\varphi)}_{i}(a)\big(\sum_{l=1}^{t} \pi^{(\varphi)}_{1}(a_{l_{1}})\cdots \pi^{(\varphi)}_{m}(a_{l_{m}})\big) (\sum_{j=1}^{n} V^{(\varphi)}_{j} h_{lj})\Big)\\
    &= \pi^{(\psi)}_{i}(a)\sum_{l=1}^{t} \pi^{(\psi)}_{1}(a_{l_{1}})\cdots \pi^{(\psi)}_{m}(a_{l_{m}}) (\sum_{j=1}^{n} V^{(\psi)}_{j} h_{lj})\\
    &=\pi^{(\psi)}_{i}(a)S\Big( \sum_{l=1}^{t} \pi^{(\varphi)}_{1}(a_{l_{1}})\cdots \pi^{(\varphi)}_{m}(a_{l_{m}})\big) (\sum_{j=1}^{n} V^{(\varphi)}_{j} h_{lj}).\\
\end{align*}
Moreover, $S^*\pi^{(\psi)}_{i}(a)= \pi^{(\varphi)}_{i}(a)S^*,$ because, $S\pi^{(\varphi)}_{i}(a^*)=\pi^{(\psi)}_{i}(a^*)S.$ Since $S^*S=T,$ if follows from the above intertwiner  relations of $S$, we have
$$T\pi^{(\varphi)}_{i}(a)=\pi^{(\varphi)}_{i}(a)T$$ for all $a\in \mathcal{A}.$
Finally,  we notice that
\begin{align*}
    \psi_{ij}(a)
    =& \left\{ \begin{array}{cc}
    V_{i}^{(\psi)*} \pi_{1}^{(\psi)}(a_{1}a_{2m-1})
    \cdots \pi_{m}^{(\psi)}(a_{m})V_{j}^{(\psi)}  &\text{if} \; k = 2m-1 \\
   &\\
   V_{i}^{(\psi)*} \pi_{1}^{(\psi)}(a_{1}a_{2m-1})
    \cdots \pi_{m}^{(\psi)}(a_{m}a_{m+1})V_{j}^{(\psi)} &\text{if} \; k = 2m, \\
\end{array} \right.\\
    =&\left\{ \begin{array}{cc}
    (SV_{i}^{(\varphi)})^* \pi_{1}^{(\psi)}(a_{1}a_{2m-1})
    \cdots \pi_{m}^{(\psi)}(a_{m})(SV_{j}^{(\varphi)})  &\text{if} \; k = 2m-1 \\
   &\\
   (SV_{i}^{(\varphi)})^* \pi_{1}^{(\psi)}(a_{1}a_{2m-1})
    \cdots \pi_{m}^{(\psi)}(a_{m}a_{m+1})(SV_{j}^{(\varphi)}) &\text{if} \; k = 2m, \\
\end{array} \right.\\
 =&\left\{ \begin{array}{cc}
    V_{i}^{(\varphi)*}S^* \pi_{1}^{(\psi)}(a_{1}a_{2m-1})
    \cdots \pi_{m}^{(\psi)}(a_{m})SV_{j}^{(\varphi)}  &\text{if} \; k = 2m-1 \\
   &\\
    V_{i}^{(\varphi)*}S^* \pi_{1}^{(\psi)}(a_{1}a_{2m-1})
    \cdots \pi_{m}^{(\psi)}(a_{m}a_{m+1})SV_{j}^{(\varphi)} &\text{if} \; k = 2m, \\
\end{array} \right.\\
  =&\left\{ \begin{array}{cc}
    V_{i}^{(\varphi)*}S^*S \pi_{1}^{(\varphi)}(a_{1}a_{2m-1})
    \cdots \pi_{m}^{(\varphi)}(a_{m})V_{j}^{(\varphi)}  &\text{if} \; k = 2m-1 \\
   &\\
    V_{i}^{(\varphi)*}S^*S \pi_{1}^{(\varphi)}(a_{1}a_{2m-1})
    \cdots \pi_{m}^{(\varphi)}(a_{m}a_{m+1})V_{j}^{(\varphi)} &\text{if} \; k = 2m, \\
\end{array} \right.\\
=&\left\{ \begin{array}{cc}
    V_{i}^{(\varphi)*}T \pi_{1}^{(\varphi)}(a_{1}a_{2m-1})
    \cdots \pi_{m}^{(\varphi)}(a_{m})V_{j}^{(\varphi)}  &\text{if} \; k = 2m-1 \\
   &\\
    V_{i}^{(\varphi)*}T \pi_{1}^{(\varphi)}(a_{1}a_{2m-1})
    \cdots \pi_{m}^{(\varphi)}(a_{m}a_{m+1})V_{j}^{(\varphi)} &\text{if} \; k = 2m. \\
\end{array} \right.\\
\end{align*}
This completes the proof.
\end{proof}
We can see immediate consequence of the above Theorem as follows. 
\begin{rem}
By setting $k=1$ and $n=1$ in Theorem \ref{thm: Radon Nikodym derivative}, we can obtain the classical Radon-Nikodým Theorem for completely positive maps, which is stated in \cite{W. Arveson69}. Furthermore, as a direct consequence of Theorem~\ref{thm: Radon Nikodym derivative}, we can deduce Theorem 3.3 as presented in \cite{Heo}. For further details, please refer to \cite{Heo}.
\end{rem}
Let $\varphi = [\varphi_{ij}]\in ICP\left (M_{n}(\mathcal{A})^{k},  M_{n}(\mathcal{B(H)})\right).$ Then $\varphi$ is said to be pure if for any 
$\psi = [\psi_{ij}]\in ICP\left (M_{n}(\mathcal{A})^{k},  M_{n}(\mathcal{B(H)})\right)$ with $\varphi -\psi \in ICP\left (M_{n}(\mathcal{A})^{k},  M_{n}(\mathcal{B(H)})\right)$ ensures that 
$\psi = \lambda \varphi$ for some $\lambda \in [0,1].$

\begin{rem}    
In view of Theorem \ref{thm: Radon Nikodym derivative}, it is easy to see that a block multilinear  CP map
$\varphi$ is pure if and only if the associated von Neumann algebra $\bigcap\limits_{i=1}^{m}\pi^{(\varphi)}_{i}
(\mathcal{A})'$ is trivial.
\end{rem}

\section{Concluding remarks} \label{conclud remark}

In this section, we present some applications of Theorem \ref{Theorem: BlockStinespring}. By setting $n=1$ in Theorem \ref{Theorem: BlockStinespring}, we can recover more refined versions of \cite[Theorem 2.1]{Heo 00} and \cite[Theorem 2.1]{Heo} without requiring the symmetric and completely bounded assumptions, thanks to our previous results (see Proposition \ref{Prop: positivity implies symmetric} and Proposition \ref{invariant CP implies CB}). Moreover, by choosing $k=1$ in Theorem \ref{Theorem: BlockStinespring}, we can retrieve \cite[Corollary 2.4]{Heo99} by J. Heo. Similarly, by considering $k=1$ in Corollary \ref{Kaplan+ Multilinear}, we can obtain \cite[Theorem 2.1]{AK89} by A. Kaplan. Furthermore, Theorem \ref{thm: Radon Nikodym derivative} yields significant results when we set $k=1$. This choice allows us to recover the main result presented in \cite{Joita2}. Additionally, when considering the specific case of $n=1$, we are able to obtain \cite[Theorem 3.3]{Heo} as a direct consequence. Finally, we observe the Russo-Dye type theorem for invariant multilinear CP maps.
\begin{cor}\label{Cor: Multilinear Russo Dye}
Let $\varphi \in ICP (\mathcal{A}^k,\mathcal{B(H)}),~k \in \mathbb{N}$. Then $\|\varphi\|_{\mbox{cb}}=\|\varphi(1,1,\ldots,1)\|$.
\end{cor}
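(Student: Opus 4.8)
The plan is to deduce Corollary~\ref{Cor: Multilinear Russo Dye} from the Stinespring-type representation in Theorem~\ref{Theorem: BlockStinespring} by specializing to $n=1$. So first I would observe that for $n=1$ the block map $\varphi=[\varphi_{11}]$ is simply an invariant multilinear CP map $\varphi\in ICP(\A^k,\mathcal{B(H)})$, and Theorem~\ref{Theorem: BlockStinespring} furnishes an $m$-commuting family of unital $*$-homomorphisms $\pi_1,\dots,\pi_m:\A\to\mathcal{B(K)}$ and a single operator $V\in\mathcal{B(H,K)}$ so that, in the case $k=2m-1$,
\[
\varphi(a_1,\dots,a_{2m-1})=V^*\pi_1(a_m)\pi_2(a_{m-1}a_{m+1})\cdots\pi_m(a_1a_{2m-1})V,
\]
with the analogous formula for $k=2m$. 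In particular $\varphi(1,\dots,1)=V^*V$, so $\|V\|^2=\|\varphi(1,\dots,1)\|$.

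Next I would use this factorization to bound $\|\varphi_t\|$ uniformly in $t$. Amplifying the representation, $\varphi_t$ factors as a product of the $t$-th ampliations $\pi_p^{(t)}=\pi_p\otimes\mathrm{id}_{M_t}$ (which are still contractive $*$-homomorphisms, hence completely contractive) sandwiched between $V^{(t)}=V\otimes I_t$ and $(V^{(t)})^*$, with the matrix arguments $[a_{l_{ij}}]$ distributed through the slots exactly as in the multilinear ampliation formula. Since each $\pi_p$ is a $*$-homomorphism we have $\|\pi_p(x)\|\le\|x\|$ for all $x$, and the key point is that a product of the form $\pi_1(b_1)\cdots\pi_m(b_m)$ with each $\|b_p\|\le 1$ has norm at most $1$; combined with the submultiplicativity of the matrix norms on each $M_t(\A)$ and the estimate $\|V^{(t)}\|=\|V\|$, this yields
\[
\|\varphi_t([a_{1_{ij}}],\dots,[a_{k_{ij}}])\|\le\|V\|^2\prod_{l=1}^k\|[a_{l_{ij}}]\|,
\]
so $\|\varphi_t\|\le\|V\|^2=\|\varphi(1,\dots,1)\|$ for every $t$, whence $\|\varphi\|_{\mathrm{cb}}\le\|\varphi(1,\dots,1)\|$. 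The reverse inequality is immediate since $\|\varphi(1,\dots,1)\|\le\|\varphi\|\le\|\varphi\|_{\mathrm{cb}}$.

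The one slightly delicate book-keeping step — and the main obstacle — is making sure the matrix arguments in the ampliated map are routed through the $\pi_p$'s in a way that genuinely exhibits $\varphi_t$ as $(V^{(t)})^*\,\pi_1^{(t)}(\cdot)\cdots\pi_m^{(t)}(\cdot)\,V^{(t)}$ of products of the $[a_{l_{ij}}]$, so that the submultiplicative bound applies cleanly; this is exactly the content of checking that the block-Stinespring formula of Theorem~\ref{Theorem: BlockStinespring} is compatible with ampliation, which follows from the invariance identities and the multilinear ampliation formula. Once that identification is in place, everything reduces to the elementary observation that $*$-homomorphisms are completely contractive. Finally, to cover all $k$ simultaneously I would simply remark that the argument is identical in the even case $k=2m$, using the corresponding formula from Theorem~\ref{Theorem: BlockStinespring}, so that the statement holds for all $k\in\mathbb{N}$.
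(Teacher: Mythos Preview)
Your proposal is correct and follows essentially the same route as the paper: obtain the Stinespring factorization $\varphi=V^*\pi_1(\cdot)\cdots\pi_m(\cdot)V$ (the paper cites Heo's theorem directly, you specialize Theorem~\ref{Theorem: BlockStinespring} to $n=1$, which is the same thing), observe $\|\varphi(1,\dots,1)\|=\|V\|^2$, and then verify that the ampliation $\varphi_t$ factors as $(V\otimes I_t)^*(\pi_m)_t[\cdot]\cdots(\pi_1)_t[\cdot]\cdots(\pi_m)_t[\cdot](V\otimes I_t)$ so that $\|\varphi_t\|\le\|V\|^2$. The paper carries out this last ``bookkeeping'' step explicitly for $t=2$ before asserting the general case, whereas you describe it abstractly, but the argument is identical.
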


\begin{proof}
We shall provide an explicit argument for the case    $k=2m-1,~m \geq 1$. Similar procedure will follow for $k=2m,~m \geq 1$.
By the multilinear version of Stinespring Theorem, we get $\varphi(a_1,a_2,a_3,\ldots,a_{2m-1})=V^*\pi_{1}(a_m)\pi_{2}(a_{m-1}a_{m+1})\ldots \pi_{m}(a_1a_{2m-1})V$, for all $a_1,a_2,a_3,\ldots, a_{2m-1} \in \mathcal{A}$, where $V \in \mathcal{B(H, K)}$ and ${\pi_{i}:\A\to \mathcal{B(K)}},$ $i=1,2, \ldots, m$, are commuting $*$-homomorphisms (see \cite[Theorem 2.1]{Heo}). Let $\|a_i\| \leq 1$ for $i=1,2,3, \ldots, 2m-1$ and $h, k \in \mathcal{H}, \|h\| \leq 1, \|k\| \leq 1$. Now,
\begin{align*}
&\big \langle \varphi(a_1,a_2,a_3,\ldots,a_{2m-1})h,k  \big \rangle=\big \langle V^*\pi_{1}(a_m)\pi_{2}(a_{m-1}a_{m+1})\ldots \pi_{m}(a_1a_{2m-1})Vh,k \big \rangle\\
=&\big \langle \pi_{1}(a_m)\pi_{2}(a_{m-1}a_{m+1})\ldots \pi_{m}(a_1a_{2m-1})Vh,Vk \big \rangle.
\end{align*}
Putting both sides $\vert \cdot\vert$ we get, 
\begin{align*}
\big\vert \big \langle \varphi(a_1,a_2,a_3,\ldots,a_{2m-1})h,k  \big \rangle\big\vert=\big\vert \big \langle \pi_{1}(a_m)\pi_{2}(a_{m-1}a_{m+1})\ldots \pi_{m}(a_1a_{2m-1})Vh,Vk \big \rangle\big\vert \leq \|V\|^2.
\end{align*}
It gives us $\|\varphi \|\leq \|V\|^2$. But $\|\varphi(1,1,1, \ldots,1)\|=\|V^*V\|=\|V\|^2$, so we have $\|\varphi\| \leq \|\varphi(1,1,1, \ldots,1)\|$. Clearly $\|\varphi(1,1,1, \ldots,1)\| \leq \|\varphi\|$ and hence $\|\varphi\|= \|\varphi(1,1,1,\ldots,1)\|$.  Then,
\begin{align*}
&\varphi_2([a_{1_{ij}}],[a_{2_{ij}}],[a_{3_{ij}}],\ldots,[a_{{2m-1}_{ij}}])\\
=&\left[\sum_{r_1,r_2,\ldots,r_{2m-1}=1}^2 \varphi(a_{1_{ir_1}},a_{2_{r_1r_2}},a_{3_{r_2r_3}},\ldots,a_{{2m-1}_{r_{2m-2}j}})\right]\\
=&\left[\sum_{r_1,r_2,\ldots,r_{2m-1}=1}^2V^*\pi_{1}(a_{m_{r_{m-1}r_m}})\pi_{2}(a_{{m-1}_{r_{m-2}r_{m-1}}}a_{{m+1}_{r_mr_{m+1}}})\ldots \pi_{m}(a_{1_{ir_1}}a_{{2m-1}_{r_{2m-2j}}})V\right]\\
=&\begin{bmatrix} 
 V^*& &0\\
 0& &V^*
\end{bmatrix}
\left[\sum_{r_1,r_2,\ldots,r_{2m-1}=1}^2\pi_{m}(a_{1_{ir_1}})\ldots\pi_{1}(a_{m_{r_{m-1}r_m}})\ldots\pi_{m}(a_{{2m-1}_{r_{2m-2j}}})\right]
\begin{bmatrix} 
 V& &0\\
 0& &V
\end{bmatrix}\\
=&\begin{bmatrix} 
 V^*& &0\\
 0& &V^*
\end{bmatrix}
\left[\pi_{m}(a_{1_{ij}})\right]\ldots\left[\pi_{1}(a_{{m}_{ij}})\right]\ldots\left[\pi_{m}(a_{{2m-1}_{ij}})\right]
\begin{bmatrix} 
 V& &0\\
 0& &V
\end{bmatrix}\\
=&\begin{bmatrix} 
 V^*& &0\\
 0& &V^*
\end{bmatrix}
(\pi_{m})_2\big[a_{1_{ij}}\big]\ldots(\pi_{1})_2\big[a_{m_{ij}}\big]\ldots(\pi_{m})_2\big[a_{{2m-1}_{ij}}\big]
\begin{bmatrix} 
 V& &0\\
 0& &V
\end{bmatrix}.
\end{align*}
Now putting both side $\|.\|$ we get,
\begin{align*}
& \big\|\varphi_2\left(\big[a_{1_{ij}}\big], \big[a_{2_{ij}}\big],\big[a_{3_{ij}}\big],\ldots,\big[a_{{2m-1}_{ij}}\big]\right)\big\| 
\leq \big \|V^*\big\|\big\|V \big\| \big\|\big[a_{1_{ij}}\big]\big\|\big\|\big[a_{2_{ij}}\big] \big\| \big\|\big[a_{3_{ij}}\big]\big\|\ldots \big \|\big[a_{{2m-1}_{ij}}\big] \big\|.
\end{align*}
So, we have $\|\varphi_2\| \leq \|V\|^2$. 
Since $\|\varphi(1,1,1, \ldots,1)\|=\|V\|^2$,  we get $\|\varphi_2\|\leq \|\varphi(1,1,1,\ldots,1)\|=\|\varphi\| \leq \|\varphi_2\|$. Similarly we can show that $\|\varphi_n\|=\|\varphi(1,1,1,\ldots,1)\|$ for all $n \in \mathbb{N}$. This gives us $\|\varphi\|_{\mbox{cb}}=\|\varphi(1,1,1,\ldots,1)\|$.
This completes the proof.
\end{proof}

\vspace{0.2in}

\noindent\textit{Acknowledgements:}
The research of first named author is supported by the Theoretical Statistics and Mathematics Unit, Indian Statistical Institute, Bangalore Centre, India and J. C. Bose Fellowship of SERB (India) of Prof. B V Rajarama Bhat. 
The research of the second named author is supported by the NBHM postdoctoral fellowship, Department of Atomic Energy (DAE), Government of India (File No: 0204/3/2020/R$\&$D-II/2445). Both authors would like to express their deep appreciation to Prof. Mathew Graydon from Waterloo University, specifically the QIT groups, for sharing the profound connection between multilinear block CP maps and quantum information theory.
The authors would like to thank Professor Erik Christensen for fruitful discussions. Both authors would like to thank Professor Michael Skeide for his insightful comments to improve the readability of the manuscript.
The authors would like to express their gratitude to Prof. B V Rajarama Bhat and Prof. Jaydeb Sarkar for their invaluable guidance and inspiration throughout the course of this project.


\end{document}